    \newcommand{\Rmnum}[1]{\expandafter\@slowromancap\romannumeral #1@}
\def\ps@pprintTitle{%
  \let\@oddhead\@empty
  \let\@evenhead\@empty
  \def\@oddfoot{\reset@font\hfil\thepage\hfil}
  \let\@evenfoot\@oddfoot
}
\newtheorem{theorem}{Theorem}[section]
\newtheorem{lemma}{Lemma}[section]
\newtheorem{corollary}{Corollary}[section]
\newtheorem{example}{Example}[section]
\newtheorem{remark}{Remark}[section]
\journal{Elsevier}
\begin{document}
\newcommand{\T}{\mathbb{T}}
\newcommand{\R}{\mathbb{R}}
\newcommand{\Q}{\mathbb{Q}}
\newcommand{\N}{\mathbb{N}}
\newcommand{\Z}{\mathbb{Z}}
\newcommand{\tx}[1]{\quad\mbox{#1}\quad}
\renewcommand{\theequation}%
{\arabic{section}.\arabic{equation}}
\numberwithin{equation}{section}


\begin{frontmatter}

\title{New oscillation criteria for third-order half-linear advanced differential equations
\tnoteref {t1}}\tnotetext[t1]{This work was supported by Youth Program of National Natural Science
Foundation of China (61304008)
and Youth Program of  Natural Science Foundation of Shandong Province  (ZR2013FQ033).}

\author{Jianli Yao \corref{t1}}
\cortext[t1]{Corresponding Author. Tel.: +86 18866865936 }
\ead{yaojianli@sdjzu.edu.cn}
\author{Xiaoping Zhang}
\ead{zxp@sdjzu.edu.cn}
\author{Jiangbo Yu}
\ead{jbyu@sdjzu.edu.cn}
\address{School of Science, Shandong Jianzhu
University, Ji'nan 250101, P.R.China}

\begin{abstract}
\quad The theme of this article is to provide some sufficient conditions for the asymptotic property  and oscillation of all solutions of third-order half-linear differential equations with advanced argument of the form
$$\left(r_{2}(t)\left(\left(r_{1}(t)\left(y'(t)\right)^{\alpha}\right)'\right)^{\beta}\right)'
+q(t)y^{\gamma}\left(\sigma(t)\right)=0,\ t\geq t_{0}>0,$$
where $\int^{\infty}r_{1}^{-\frac{1}{\alpha}}(s)\text{d}s<\infty$ and $\int^{\infty}r_{2}^{-\frac{1}{\beta}}(s)\text{d}s<\infty$.
The criteria in this paper improve and complement some existing ones. The results are illustrated by two
Euler-type differential equations.
\end{abstract}
\begin{keyword}
Third-order  differential equation\sep Advanced argument \sep Oscillation\sep Asymptotic behavior\sep
Noncanonical operators \\
MSC:\ \ 34C10\ \ 34K11
\end{keyword}

\end{frontmatter}

\vspace{.4cm} \noindent{\bf 1. Introduction}
 \setcounter{section}{1}
 \setcounter{equation}{0}

In this paper, we consider the oscillatory and asymptotic behavior of solutions to the  third-order half-linear advanced differential equations of the form
\begin{equation}
\left(r_{2}(t)\left(\left(r_{1}(t)\left(y'(t)\right)^{\alpha}\right)'\right)^{\beta}\right)'
+q(t)y^{\gamma}\left(\sigma(t)\right)=0,\ t\geq t_{0}>0.
\end{equation}
Throughout the whole paper, we assume that

$(H_{1})$\ $\alpha$, $\beta$ and $\gamma$ are quotient of odd positive integers;

 $(H_{2})$\ the functions $r_{1},r_{2}\in C\left([t_{0},\infty),(0,\infty)\right)$ are of noncanonical
 type (see Trench [2]), that is,
 $$
 \pi_{1}\left(t_{0}\right):=\int_{t_{0}}^{\infty}r_{1}^{-\frac{1}{\alpha}}(s)\text{d}s<\infty,
 \ \pi_{2}\left(t_{0}\right):=\int_{t_{0}}^{\infty}r_{2}^{-\frac{1}{\beta}}(s)\text{d}s<\infty;
 $$

 $(H_{3})$\  $q\in C\left([t_{0},\infty),[0,\infty)\right)$ does not vanish eventually;

 $(H_{4})$\ $\sigma\in C^{1}\left([t_{0},\infty),(0,\infty)\right)$, $\sigma(t)\geq t$, $\sigma'(t)\geq 0$
 for all $t\geq t_{0}$.

 By a solution of $Eq.$ $(1.1)$, we mean a nontrivial real valued function  $y(t)\in C\left(\big[T_{x},\infty\big), \mathbb{R}\right)$, $T_{x}\geq t_{0}$,  which has the property that
$y$, $r_{1}\left(y'\right)^{\alpha}$,
$r_{2}\left(\left(r_{1}\left(y'\right)^{\alpha}\right)'\right)^{\beta}$
are continuous and differentiable for all $t\in \big[T_{x},\infty\big)$, and satisfies $(1.1)$ on $\big[T_{x},\infty\big)$.
We only need to consider those solutions of $(1.1)$ which exist on some half-line  $\big[T_{x},\infty\big)$ and satisfy the condition
$$ sup\{|y(t)|: T\leq t<\infty\}>0 $$
for any $T\geq T_{x}$.
In the sequel, we assume that $(1.1)$ possess such  solutions.

As is customary, a solution $y(t)$ of $(1.1)$ is called oscillatory if it has arbitrary large zeros on  $\big[T_{x},\infty\big)$.
Otherwise, it is called nonoscillatory. The equation $(1.1)$ is said to be oscillatory if all its solutions oscillate.

Following classical results of Kiguradze and Kondrat'ev ($[3]$), we say that $(1.1)$ has property $A$ if any solution $y$ of $(1.1)$ is either oscillatory or satisfies $lim_{t\rightarrow\infty} y(t)=0$. Instead of calling property $A$, some authors say that equation $(1.1)$ is almost oscillatory.

For the sake of brevity, we define the operators
$$L_{0}y=y, L_{1}y=r_{1}\left(y'\right)^{\alpha},
L_{2}y=r_{2}\left(\left(r_{1}\left(y'\right)^{\alpha}\right)'\right)^{\beta},
L_{3}y=\left(r_{2}\left(\left(r_{1}\left(y'\right)^{\alpha}\right)'\right)^{\beta}\right)'.$$

Also, we use the symbols $\uparrow$ and $\downarrow$ to indicate whether the function is nondecreasing
and nonincreasing, respectively.

\vspace{.4cm} \noindent{\bf 2. Main results}
 \setcounter{section}{2}
 \setcounter{equation}{0}

As usual, all functional inequalities considered in this paper are supposed to hold eventually,
that is, they are satisfied for all $t$ large enough.

Without loss of generality, we need only to consider eventually positive solutions of $(1.1)$,
since if $y$ satisfies $(1.1)$, so does $-y$.

The following lemma on the structure of possible nonoscillatory solutions of $(1.1)$ plays a crucial role
in the proofs of the main results.

\begin{lemma}\label{2.1}
\ Assume $(H_{1})-(H_{4})$,  and that $y$ is an eventually positive solution of  $Eq.$ $(1.1)$.
Then there exist $t_{1}\in\big[t_{0}, \infty\big)$ such that $y$ eventually belongs to one  of the following classes:
$$ S_{1}=\big\{ y:\  y>0,\ \  L_{1}y<0, \ \  L_{2}y<0,\ \  L_{3}y<0\big\};$$
$$ S_{2}=\big\{ y:\  y>0,\ \  L_{1}y<0, \ \  L_{2}y>0,\ \  L_{3}y<0\big\};$$
$$ S_{3}=\big\{ y:\  y>0,\ \  L_{1}y>0, \ \  L_{2}y>0,\ \  L_{3}y<0\big\};$$
$$ S_{4}=\big\{ y:\  y>0,\ \  L_{1}y>0, \ \  L_{2}y<0,\ \  L_{3}y<0\big\},$$
for $t\geq t_{1}$.

\end{lemma}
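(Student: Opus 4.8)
The plan is to read the differential equation from the top operator downward, determining the sign of each $L_i y$ in turn and using the monotonicity that each sign forces on the operator one level below it. The signs of $L_1 y$ and $L_2 y$ are the only ones left free, and their two independent binary choices will produce exactly the four classes.

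First I would extract the sign of $L_3 y$ directly from $(1.1)$. Since $y$ is eventually positive and $(H_4)$ gives $\sigma(t)\geq t\to\infty$, the advanced argument yields $y(\sigma(t))>0$ for all large $t$, and as $\gamma$ is a quotient of odd positive integers by $(H_1)$, also $y^{\gamma}(\sigma(t))>0$. Combined with $q\geq 0$ from $(H_3)$, equation $(1.1)$ gives $L_3 y=-q(t)\,y^{\gamma}(\sigma(t))\leq 0$ eventually. Because $q$ does not vanish eventually, $(L_2 y)'=L_3 y$ is not eventually identically zero, so $L_2 y$ is strictly decreasing, hence not eventually constant, on some ray $[t_1,\infty)$. (This nonpositive, non-degenerate sign of $L_3 y$ is what is recorded loosely as $L_3 y<0$ in each class.)

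Next I would propagate this one level down. A monotone function has a limit in the extended reals, so a strictly decreasing, non-eventually-constant $L_2 y$ is eventually of one strict sign: either $L_2 y>0$ for all large $t$, or $L_2 y$ attains a nonpositive value and, being strictly decreasing, stays strictly negative thereafter, so $L_2 y<0$ for all large $t$. Now $L_2 y=r_2(t)\big((L_1 y)'\big)^{\beta}$ with $r_2>0$ and $\beta$ a quotient of odd positive integers, so $(L_1 y)'$ carries exactly the sign of $L_2 y$. Hence $(L_1 y)'$ is eventually of one strict sign, $L_1 y$ is eventually strictly monotone, and therefore (crossing zero at most once) $L_1 y$ is itself eventually of one strict sign, either $L_1 y>0$ or $L_1 y<0$.

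Finally I would assemble the patterns: together with the fixed data $y>0$ and $L_3 y\leq 0$, the two independent sign choices for $L_2 y$ and $L_1 y$ give precisely the combinations $(L_1 y<0,L_2 y<0)$, $(L_1 y<0,L_2 y>0)$, $(L_1 y>0,L_2 y>0)$, $(L_1 y>0,L_2 y<0)$, i.e. $S_1,\dots,S_4$; taking $t_1$ large enough that all the "eventually" statements hold simultaneously places $y$ in exactly one class for $t\geq t_1$. I expect the only delicate point to be converting monotonicity into an eventual \emph{strict} sign: one must exclude the degenerate case that $L_2 y$ (or $L_1 y$) is eventually identically zero, which is exactly where the hypothesis that $q$ does not vanish eventually is used, and one must carefully coordinate the several thresholds into a single $t_1$.
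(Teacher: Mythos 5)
Your proof is correct: the paper itself omits the argument as ``straightforward,'' and what you have written is exactly the standard sign-and-monotonicity analysis it has in mind --- read $L_3y\leq 0$ off the equation, use that $q$ does not vanish eventually to get the strict eventual sign dichotomy for $L_2y$, propagate through the odd powers $\beta$ and $\alpha$ to $L_1y$, and enumerate the four resulting sign patterns. The only cosmetic caveat is that $L_2y$ need only be nonincreasing and non-constant on every ray (not strictly decreasing, since $q$ may vanish on subintervals), but your dichotomy argument uses precisely this weaker fact, so nothing breaks.
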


\begin{proof}
\ The proof is straightforward and hence is omitted.
\end{proof}

Now, we will establish  one-condition criteria of property $A$ of $(1.1)$.

\begin{theorem}\label{thm2.1}
\ Assume $(H_{1})-(H_{4})$. If
\begin{equation}\label{2.1}
\int_{t_{0}}^{\infty}r_{1}^{-\frac{1}{\alpha}}(v)\left(\int_{t_{0}}^{v}r_{2}^{-\frac{1}{\beta}}(u)
\left(\int_{t_{0}}^{u}q(s)\text{d}s\right)^{\frac{1}{\beta}}\text{d}u\right)^{\frac{1}{\alpha}}\text{d}v = \infty ,
\end{equation}
then $(1.1)$ has property $A$.
\end{theorem}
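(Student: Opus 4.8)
The plan is to take an eventually positive solution $y$ of $(1.1)$ and show that it must tend to $0$, by treating the four classes supplied by Lemma~\ref{2.1}. Before touching the classes, I would record the observation that makes the single hypothesis $(2.1)$ do all the work. Because $\pi_{1}(t_{0})<\infty$, if the ``middle'' integral
\[
M_{\infty}:=\int_{t_{0}}^{\infty}r_{2}^{-\frac{1}{\beta}}(u)\left(\int_{t_{0}}^{u}q(s)\,\text{d}s\right)^{\frac{1}{\beta}}\text{d}u
\]
were finite, then the left side of $(2.1)$ would be at most $M_{\infty}^{1/\alpha}\pi_{1}(t_{0})<\infty$; hence $(2.1)$ forces $M_{\infty}=\infty$. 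Similarly, since $\pi_{2}(t_{0})<\infty$, a finite $\int_{t_{0}}^{\infty}q$ would give $M_{\infty}\le\big(\int_{t_{0}}^{\infty}q\big)^{1/\beta}\pi_{2}(t_{0})<\infty$; hence $M_{\infty}=\infty$ forces $\int_{t_{0}}^{\infty}q(s)\,\text{d}s=\infty$. So $(2.1)$ together with the noncanonical conditions hands me the two auxiliary divergences $\int^{\infty}q=\infty$ and $M_{\infty}=\infty$ for free.

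With these in hand, classes $S_{2}$ and $S_{3}$ fall quickly. In both, $L_{2}y>0$; integrating $L_{3}y=(L_{2}y)'=-q\,y^{\gamma}(\sigma)$ and bounding $y(\sigma)$ below by a positive constant (which is $y(t_{1})$ when $y$ is increasing, as in $S_{3}$, or the assumed positive limit $\ell$ when $y$ is decreasing, as in $S_{2}$) gives $L_{2}y(t)\le L_{2}y(t_{1})-\text{const}\cdot\int_{t_{1}}^{t}q\to-\infty$, contradicting $L_{2}y>0$. This empties $S_{3}$ and forces $\ell=0$ in $S_{2}$. Class $S_{4}$ needs one further integration: here $y$ is increasing, so $y(\sigma)\ge c>0$ yields $-L_{2}y(t)\ge c^{\gamma}\int_{t_{1}}^{t}q$; then from $(L_{1}y)'=-r_{2}^{-\frac{1}{\beta}}(-L_{2}y)^{\frac{1}{\beta}}$ and $M_{\infty}=\infty$ one gets $L_{1}y\to-\infty$, contradicting $L_{1}y>0$. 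So $S_{4}$ is also empty.

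The genuinely three-fold case, and the main obstacle, is $S_{1}$, where $y$ decreases to some $\ell\ge0$ and I must rule out $\ell>0$. Assuming $\ell>0$, I would climb the chain of quasi-derivatives by three successive integrations, each converting the sign pattern of $S_{1}$ into a lower bound: first $-L_{2}y(t)\ge\ell^{\gamma}\int_{t_{1}}^{t}q$; next, from $(L_{1}y)'=-r_{2}^{-\frac{1}{\beta}}(-L_{2}y)^{\frac{1}{\beta}}$, the bound $-L_{1}y(t)\ge\ell^{\gamma/\beta}\int_{t_{1}}^{t}r_{2}^{-\frac{1}{\beta}}(u)\big(\int_{t_{1}}^{u}q\big)^{\frac{1}{\beta}}\text{d}u$; and finally, from $y'=-r_{1}^{-\frac{1}{\alpha}}(-L_{1}y)^{\frac{1}{\alpha}}$, integration over $[t_{1},t]$ produces
\[
y(t)\le y(t_{1})-\ell^{\gamma/(\alpha\beta)}\int_{t_{1}}^{t}r_{1}^{-\frac{1}{\alpha}}(v)\left(\int_{t_{1}}^{v}r_{2}^{-\frac{1}{\beta}}(u)\left(\int_{t_{1}}^{u}q(s)\,\text{d}s\right)^{\frac{1}{\beta}}\text{d}u\right)^{\frac{1}{\alpha}}\text{d}v .
\]
By $(2.1)$ the last integral diverges, so $y(t)\to-\infty$, contradicting $y>0$; hence $\ell=0$.

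Collecting the four cases, every eventually positive solution either fails to exist ($S_{3}$, $S_{4}$) or satisfies $\lim_{t\to\infty}y(t)=0$ ($S_{1}$, $S_{2}$), which is property $A$. The delicate point throughout is the sign bookkeeping under the half-linear roots $(\cdot)^{1/\alpha}$ and $(\cdot)^{1/\beta}$—legitimate and order-preserving precisely because $\alpha,\beta$ are quotients of odd positive integers—and carrying the positive constant $\ell$ (or $c$) cleanly through all three integrations so that it factors out of the final integral; once that is arranged, reducing the single condition $(2.1)$ to the auxiliary divergences via $\pi_{1},\pi_{2}<\infty$ is exactly what short-circuits $S_{2}$, $S_{3}$, $S_{4}$ and reserves the full strength of $(2.1)$ for $S_{1}$.
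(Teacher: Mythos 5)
Your proposal is correct, and its overall architecture coincides with the paper's: derive the two auxiliary divergences $\int^{\infty}q=\infty$ and $M_{\infty}=\infty$ from $(2.1)$ together with $\pi_{1}(t_{0}),\pi_{2}(t_{0})<\infty$ (the paper states this as $(2.2)$--$(2.3)$ without the justification you supply), then run the four-case analysis of Lemma 2.1, with three successive integrations killing $S_{1}$, one integration handling $S_{2}$, and two handling $S_{4}$ --- all exactly as in the paper. The one genuine difference is the class $S_{3}$: the paper introduces the Riccati-type quotient $w=L_{2}y/y^{\gamma}(\sigma)$, shows $w'\le L_{3}y/y^{\gamma}(\sigma)=-q$ (using $\sigma'\ge 0$ and $y'>0$ to discard the second term of the derivative), and integrates to contradict $w>0$; you instead integrate the equation directly and bound $y^{\gamma}(\sigma(s))\ge y^{\gamma}(t_{1})$ using the monotonicity of $y$, obtaining $L_{2}y(t)\le L_{2}y(t_{1})-y^{\gamma}(t_{1})\int_{t_{1}}^{t}q\to-\infty$, contradicting $L_{2}y>0$. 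Your version is more elementary, unifies $S_{2}$ and $S_{3}$ under a single argument, and does not invoke the hypothesis $\sigma'\ge 0$ at that step, while the paper's Riccati computation is the template it reuses later (e.g., in Theorem 2.10), which is presumably why it appears here. One housekeeping point common to both proofs, which you may as well make explicit: the divergences in $(2.1)$, $(2.2)$, $(2.3)$ are stated with lower limit $t_{0}$ but applied with lower limits $t_{1},t_{2}$; this is harmless since replacing $\int_{t_{0}}^{u}q$ by $\int_{t_{1}}^{u}q$ changes the inner integral by a bounded amount and $\pi_{1},\pi_{2}$ are finite, so divergence is preserved.
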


\begin{proof}
\ First of all, it is important to note that if  $(H_{2})$ and $(2.1)$ hold, then
\begin{equation}\label{2.2}
\int_{t_{0}}^{\infty}r_{2}^{-\frac{1}{\beta}}(u)
\left(\int_{t_{0}}^{u}q(s)\text{d}s\right)^{\frac{1}{\beta}}\text{d}u
= \left(\int_{t_{0}}^{\infty}q(s)\text{d}s\right)^{\frac{1}{\beta}}= \infty ,
\end{equation}
i.e.,
\begin{equation}\label{2.3}
\int_{t_{0}}^{\infty}q(s)\text{d}s= \infty.
\end{equation}

Now, suppose on the contrary that $y$ is a nonoscillatory solution of $(1.1)$ on $\big[t_{0},\infty\big)$.
Without loss of generality, we may assume that $t_{1}\geq t_{0}$ such that $y(t)>0$ and $ y\left(\sigma(t)\right)>0$
for $t\geq t_{1}$. By $Lemma$ $ 2.1$, we know that $y$ eventually belongs to one  of the four classes in $Lemma$ $ 2.1$.
 We will consider each of them separately.

Assume $y\in S_{1}$. Then from $L_{1}y<0$, that is, $r_{1}\left(y'\right)^{\alpha}<0$, we see that $y'<0$ and $y$ is decreasing. On the other words, there exists a finite constant $\ell\geq 0$ such that $lim_{t\rightarrow\infty} y(t)=\ell$.
Obviously, $lim_{t\rightarrow\infty} y\left(\sigma(t)\right)=\ell$, too.

We claim that $\ell=0$. Assume on the contrary that $\ell>0$. Then there exists $t_{2}\geq t_{1}$ such that $y(t)\geq y\left(\sigma(t)\right)\geq \ell$ for $t\geq t_{2}$.
Thus,
\begin{equation}\label{2.4}
-L_{3}y(t)=q(t)y^{\gamma}\left(\sigma(t)\right)\geq \ell^{\gamma}\cdot q(t),
\end{equation}
for $t\geq t_{2}$. Integrating $(2.4)$ from $t_{2}$ to $t$, we have
$$-L_{2}y(t)\geq -L_{2}y\left(t_{2}\right)+\ell^{\gamma}\int_{t_{2}}^{t}q(s)\text{d}s
\geq \ell^{\gamma}\int_{t_{2}}^{t}q(s)\text{d}s.$$
Therefore,
\begin{equation}\label{2.5}
-\left(L_{1}y\right)'(t)\geq \ell^{\frac{\gamma}{\beta}}r_{2}^{-\frac{1}{\beta}}(t)
\left(\int_{t_{2}}^{t}q(s)\text{d}s\right)^{\frac{1}{\beta}}.
\end{equation}
Integrating $(2.5)$ again from $t_{2}$ to $t$, we have
$$\aligned -L_{1}y(t)
&\geq -L_{1}y\left(t_{2}\right)+\ell^{\frac{\gamma}{\beta}}\int_{t_{2}}^{t}r_{2}^{-\frac{1}{\beta}}(u)
\left(\int_{t_{2}}^{u}q(s)\text{d}s\right)^{\frac{1}{\beta}}\text{d}u\\
&\geq \ell^{\frac{\gamma}{\beta}}\int_{t_{2}}^{t}r_{2}^{-\frac{1}{\beta}}(u)
\left(\int_{t_{2}}^{u}q(s)\text{d}s\right)^{\frac{1}{\beta}}\text{d}u,
\endaligned$$
that is,
\begin{equation}\label{2.6}
-y'(t)\geq \ell^{\frac{\gamma}{\alpha\beta}}r_{1}^{-\frac{1}{\alpha}}(t)\left(\int_{t_{2}}^{t}r_{2}^{-\frac{1}{\beta}}(u)
\left(\int_{t_{2}}^{u}q(s)\text{d}s\right)^{\frac{1}{\beta}}\text{d}u\right)^{\frac{1}{\alpha}}.
\end{equation}
Integrating $(2.6)$  from $t_{2}$ to $t$ for the last time, and taking account of $(2.1)$, we have
$$y(t)\leq y\left(t_{2}\right)-\ell^{\frac{\gamma}{\alpha\beta}}
\int_{t_{2}}^{t}r_{1}^{-\frac{1}{\alpha}}(v)\left(\int_{t_{2}}^{v}r_{2}^{-\frac{1}{\beta}}(u)
\left(\int_{t_{2}}^{u}q(s)\text{d}s\right)^{\frac{1}{\beta}}\text{d}u\right)^{\frac{1}{\alpha}}\text{d}v \rightarrow -\infty ,$$
as $t\rightarrow\infty$, which contradicts to the positivity of $y$. Thus,  $lim_{t\rightarrow\infty} y(t)=0$.

Assume $y\in S_{2}$. Proceeding the same steps above, we arrive at $(2.4)$.  Integrating $(2.4)$ from $t_{2}$ to $t$, we have
\begin{equation}\label{2.7}
L_{2}y(t)
\leq L_{2}y\left(t_{2}\right)-\ell^{\gamma}\int_{t_{2}}^{t}q(s)\text{d}s\rightarrow -\infty ,\ \ (t\rightarrow\infty),
\end{equation}
where we used $(2.3)$. This contradicts to the positivity of $L_{2}y$ and thus, $lim_{t\rightarrow\infty} y(t)=0$.

Assume $y\in S_{3}$. We define a function
$$w(t):=\frac{L_{2}y(t)}{y^{\gamma}\left(\sigma(t)\right)},\ \  t\geq t_{2}.$$
Obviously, $w(t)$ is positive for $t\geq t_{2}$. Using $(1.1)$, we obtain
$$\aligned
w'(t)
&=\frac{\left(L_{2}y\right)'(t)}{y^{\gamma}\left(\sigma(t)\right)}
-\frac{L_{2}y(t)\cdot\gamma\cdot y^{\gamma-1}\left(\sigma(t)\right)\cdot y'\left(\sigma(t)\right)\cdot\sigma'(t)}{y^{2\gamma}\left(\sigma(t)\right)}\\
&=\frac{L_{3}y(t)}{y^{\gamma}\left(\sigma(t)\right)}
-\gamma\frac{L_{2}y(t)\cdot y'\left(\sigma(t)\right)\cdot\sigma'(t)}{y^{\gamma +1}\left(\sigma(t)\right)}\\
&\leq \frac{L_{3}y(t)}{y^{\gamma}\left(\sigma(t)\right)}=-q(t).
\endaligned
$$
Integrating the above inequality from $t_{2}$ to $t$, and taking  $(2.3)$ into account, we have
$$w(t)\leq w\left(t_{2}\right)-\int_{t_{2}}^{t}q(s)\text{d}s\rightarrow -\infty ,\ \  (t\rightarrow\infty).$$
This contradicts to the positivity of $w$. Hence, $ S_{3}={\O}$.

Assume $y\in S_{4}$. Considering that $y$ is increasing, and integrating $(1.1)$ from $t_{2}$ to $t$,
we obtain
$$-L_{2}y(t)=-L_{2}y\left(t_{2}\right)+\int_{t_{2}}^{t}q(s)y^{\gamma}\left(\sigma(s)\right)\text{d}s
\geq y^{\gamma}\left(\sigma(t_{2})\right)\int_{t_{2}}^{t}q(s)\text{d}s,$$
i.e.,
\begin{equation}\label{2.8}
-\left(L_{1}y\right)'(t)
\geq k^{\frac{1}{\beta}}r_{2}^{-\frac{1}{\beta}}(t)
\left(\int_{t_{2}}^{t}q(s)\text{d}s\right)^{\frac{1}{\beta}},
\end{equation}
where $k:=y^{\gamma}\left(\sigma\left(t_{2}\right)\right)$. Integrating $(2.8)$ from $t_{2}$ to $t$ and using $(2.2)$, we have
\begin{equation}\label{2.9}
\aligned
L_{1}y(t)
&\leq L_{1}y\left(t_{2}\right)
-k^{\frac{1}{\beta}}\int_{t_{2}}^{t}r_{2}^{-\frac{1}{\beta}}(u)
\left(\int_{t_{2}}^{u}q(s)\text{d}s\right)^{\frac{1}{\beta}}\text{d}u\\
&\rightarrow -\infty ,\ \ (t\rightarrow\infty).
\endaligned
\end{equation}
This is a contradiction to the positivity of $L_{1}y$. Thus, $ S_{4}={\O}$. The proof is complete.
\end{proof}

\begin{remark}
\ It is clear that any nonoscillatory solution in $Theorem$ $ 2.1$ eventually belongs to  either
$S_{1}$ or $S_{2}$ in $Lemma$ $ 2.1$, that is, $ S_{3}=S_{4}={\O}$.
\end{remark}

Next, we formulate some additional information about the monotonicity of solutions in $S_{2}$ or $S_{1}$.

\begin{lemma}\label{2.2}
\ Assume $(H_{1})-(H_{4})$. Let $y\in S_{2}$  in $Lemma$ $ 2.1$ on $\big[t_{1}, \infty\big)$ for some $t_{1}\geq t_{0}$,
and define the function
\begin{equation}\label{2.10}
\pi (t):=\int_{t}^{\infty}r_{1}^{-\frac{1}{\alpha}}(s)\pi_{2}^{\frac{1}{\alpha}}(s)\text{d}s.
\end{equation}
If
\begin{equation}\label{2.11}
\int_{t_{0}}^{\infty}q(s)\pi^{\gamma}\left(\sigma(s)\right)\text{d}s=\infty,
\end{equation}
then there exists $t_{2}\geq t_{1}$ such that
\begin{equation}\label{2.12}
\frac{y(t)}{\pi (t)}\downarrow 0
\end{equation}
for $t\geq t_{2}$.
\end{lemma}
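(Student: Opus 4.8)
The plan is to combine the $S_{2}$ sign pattern with the divergence condition $(2.11)$ in two stages: first to identify the limits of $y$ and of $L_{1}y$, and then to bootstrap a tower of monotonicity estimates from level $2$ up to level $0$, which will yield simultaneously the monotonicity and the vanishing of $y/\pi$.

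First I would record the structural facts. Since $\pi_{2}(s)\le\pi_{2}(t_{0})$ and $\pi_{1}(t_{0})<\infty$, the function $\pi$ in $(2.10)$ is finite and decreases to $0$, with $\pi'(t)=-r_{1}^{-1/\alpha}(t)\pi_{2}^{1/\alpha}(t)$. For $y\in S_{2}$ the signs force $y$, $-L_{1}y$ and $L_{2}y$ to be eventually monotone, so that $y\downarrow\ell\ge0$, $L_{1}y\uparrow L_{1}y(\infty)\le0$ and $L_{2}y\downarrow$ (with $L_{2}y>0$). Integrating $(1.1)$, that is $-(L_{2}y)'=q\,y^{\gamma}(\sigma)$, from $t_{2}$ to $\infty$ gives the crucial finiteness $\int_{t_{2}}^{\infty}q(s)y^{\gamma}(\sigma(s))\,\text{d}s\le L_{2}y(t_{2})<\infty$, which is the engine of every limit argument below. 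I would also note that, since $\pi$ is bounded and $\pi\le\pi_{2}^{1/\alpha}(t_{0})\pi_{1}$, condition $(2.11)$ forces both $\int^{\infty}q\,\text{d}s=\infty$ and $\int^{\infty}q\,\pi_{1}^{\gamma}(\sigma)\,\text{d}s=\infty$.

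Next I would pin down the two limits, which is the heart of the matter. If $\ell>0$ then $y(\sigma(t))\ge\ell$ eventually, whence $\int q\,y^{\gamma}(\sigma)\ge\ell^{\gamma}\int q=\infty$, contradicting the finiteness above; hence $\ell=0$. If $L_{1}y(\infty)=-c<0$ then $-L_{1}y(t)\ge c$, so from $-y'=r_{1}^{-1/\alpha}(-L_{1}y)^{1/\alpha}$ we obtain $y(t)\ge c^{1/\alpha}\pi_{1}(t)$ and therefore $\int q\,y^{\gamma}(\sigma)\ge c^{\gamma/\alpha}\int q\,\pi_{1}^{\gamma}(\sigma)=\infty$, again impossible; hence $L_{1}y(\infty)=0$. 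I expect this last step to be the main obstacle: the monotonicity of $y/\pi$ genuinely requires $L_{1}y(\infty)=0$, because otherwise $(-L_{1}y)/\pi_{2}$ grows without bound and the whole tower collapses, and it is precisely here that $(2.11)$ is indispensable.

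Finally I would bootstrap. Using $L_{1}y(\infty)=0$, I write $-L_{1}y(t)=\int_{t}^{\infty}(L_{2}y(s))^{1/\beta}r_{2}^{-1/\beta}(s)\,\text{d}s$ and bound this by the decreasing factor $L_{2}y$ to get $-L_{1}y(t)\le(L_{2}y(t))^{1/\beta}\pi_{2}(t)$; a direct differentiation of $(-L_{1}y)/\pi_{2}$ then shows this quotient is nonincreasing. Using $\ell=0$, I write $y(t)=\int_{t}^{\infty}r_{1}^{-1/\alpha}(s)(-L_{1}y(s))^{1/\alpha}\,\text{d}s$ and insert the inequality $(-L_{1}y(s))/\pi_{2}(s)\le(-L_{1}y(t))/\pi_{2}(t)$ valid for $s\ge t$ to pull the constant outside, obtaining $y(t)\le\bigl((-L_{1}y(t))/\pi_{2}(t)\bigr)^{1/\alpha}\pi(t)$. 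In view of the identities for $y'$ and $\pi'$ recorded above, this bound is exactly $(y/\pi)'\le0$, so $y/\pi$ is nonincreasing and tends to some $m\ge0$. If $m>0$ then $y\ge m\pi$, giving $\int q\,y^{\gamma}(\sigma)\ge m^{\gamma}\int q\,\pi^{\gamma}(\sigma)=\infty$ by $(2.11)$, contradicting the finiteness once more; thus $m=0$ and $(2.12)$ follows.
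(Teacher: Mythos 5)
Your proof is correct, and its second half --- the estimate $-L_{1}y(t)\le (L_{2}y(t))^{1/\beta}\pi_{2}(t)$, the resulting monotonicity of $(-L_{1}y)/\pi_{2}$, the bound $y(t)\le \left((-L_{1}y(t))/\pi_{2}(t)\right)^{1/\alpha}\pi(t)$, and the conclusion $(y/\pi)'\le 0$ --- coincides exactly with the paper's. Where you genuinely differ is in how the limits are obtained and in what order. The paper begins by invoking l'Hospital's rule to write $\lim y/\pi=-\left(\lim L_{1}y/\pi_{2}\right)^{1/\alpha}=\left(\lim L_{2}y\right)^{1/(\alpha\beta)}$, kills $\lim L_{2}y$ by a single contradiction with $(2.11)$ (via $y\ge \ell^{1/(\alpha\beta)}\pi$), and so gets the limit $(2.13)$ \emph{first}, deriving $y\to 0$ and $L_{1}y\to 0$ afterwards and the monotonicity last. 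You instead take as engine the finiteness $\int^{\infty}q\,y^{\gamma}(\sigma)\,\text{d}s<\infty$ read off from integrating $(1.1)$, use the two weaker consequences $\int^{\infty}q\,\text{d}s=\infty$ and $\int^{\infty}q\,\pi_{1}^{\gamma}(\sigma)\,\text{d}s=\infty$ of $(2.11)$ to force $y(\infty)=0$ and $L_{1}y(\infty)=0$ up front, then build the monotonicity, and only at the end eliminate a positive limit $m$ of $y/\pi$ by a third contradiction with $(2.11)$ itself. Your route is slightly longer but buys rigor at a real weak point of the paper: l'Hospital's rule in the $0/0$ form requires knowing beforehand that $L_{1}y\to 0$ and $y\to 0$ (if instead $L_{1}y\to -c<0$, then $L_{1}y/\pi_{2}\to-\infty$ while the rule would report the finite value $-\left(\lim L_{2}y\right)^{1/\beta}$), yet the paper obtains these facts only as consequences of the identity it is trying to justify. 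Your step deducing $L_{1}y(\infty)=0$ from $\int^{\infty}q\,\pi_{1}^{\gamma}(\sigma)\,\text{d}s=\infty$ supplies precisely the missing hypothesis, so your argument can be read as a corrected, self-contained version of the paper's proof.
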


\begin{proof}
\ Let $y\in S_{2}$  in $Lemma$ $ 2.1$ on $\big[t_{1}, \infty\big)$ for some $t_{1}\geq t_{0}$. First, we prove that
$(2.11)$ implies
\begin{equation}\label{2.13}
\lim\limits_{t\rightarrow\infty} \frac{y(t)}{\pi (t)}=0.
\end{equation}
Using I'Hospital rule, we obtain
$$\lim\limits_{t\rightarrow\infty} \frac{y(t)}{\pi (t)}
=-\left(\lim\limits_{t\rightarrow\infty}\frac{L_{1}y(t)}{\pi_{2}(t)}\right)^{\frac{1}{\alpha}}
=\left(\lim\limits_{t\rightarrow\infty}L_{2}y(t)\right)^{\frac{1}{\alpha\beta}}. $$
Taking the decreasing of $L_{2}y(t)$ into account , there exists a finite constant $\ell\geq 0$ such that $lim_{t\rightarrow\infty}L_{2}y(t)=\ell$.
We claim that $\ell=0$. If not, then $L_{2}y(t)\geq \ell>0$, and $y(t)\geq \ell^{\frac{1}{\alpha\beta}}\pi(t)$ eventually,
say for $t\geq t_{2}$ for some $t_{2}\in \big[t_{1},\infty\big)$. Using this relation in $(1.1)$, we obtain
$$-L_{3}y(t)\geq \ell^{\frac{\gamma}{\alpha\beta}}q(t)\pi^{\gamma}\left(\sigma(t)\right),\ \ t\geq t_{2}.$$
Integrating the above inequality from $t_{2}$ to $t$, we have
$$L_{2}y(t)\leq L_{2}y\left(t_{2}\right)
-\ell^{\frac{\gamma}{\alpha\beta}}\int_{t_{2}}^{t}q(s)\pi^{\gamma}\left(\sigma(s)\right)\text{d}s
\rightarrow -\infty ,\ \  (t\rightarrow\infty),$$
which is a contradiction. Thus $(2.13)$ holds and consequently, also
\begin{equation}\label{2.14}
\lim\limits_{t\rightarrow\infty} y(t)=\lim\limits_{t\rightarrow\infty} L_{1}y(t)=0
\end{equation}
due to  the decreasing property of $\pi(t)$ and $\pi_{2}(t)$, respectively. Considering
the monotonicity of $L_{2}y$ together with $(2.14)$ yields
$$-L_{1}y(t)=L_{1}y(\infty)-L_{1}y(t)=\int_{t}^{\infty}r_{2}^{-\frac{1}{\beta}}(s)
\left(L_{2}y(s)\right)^{\frac{1}{\beta}}\text{d}s
\leq \pi_{2}(t)\left(L_{2}y(t)\right)^{\frac{1}{\beta}},$$
 and hence, there exists $t_{3}\geq t_{2}$ such that
 $$\left(\frac{L_{1}y}{\pi_{2}}\right)'(t)=\frac{\left(L_{2}y(t)\right)^{\frac{1}{\beta}}\cdot\pi_{2}(t)+L_{1}y(t)}
 {r_{2}^{\frac{1}{\beta}}(t)\cdot\pi_{2}^{2}(t)}\geq 0,\  t\geq t_{3}.$$
 Then $\frac{L_{1}y}{\pi_{2}}$ is increasing on  $\big[t_{3}, \infty\big)$. Using it together with $(2.14)$ leads to
 $$y(t)=y(t)-y(\infty)=-\int_{t}^{\infty}\frac{\pi_{2}^{\frac{1}{\alpha}}(s)\left(L_{1}y(s)\right)^{\frac{1}{\alpha}}}
 {r_{1}^{\frac{1}{\alpha}}(s)\pi_{2}^{\frac{1}{\alpha}}(s)}\text{d}s
 \leq -\left(\frac{L_{1}y(t)}{\pi_{2}(t)}\right)^{\frac{1}{\alpha}}\pi(t).$$

Therefore, there exists $t_{4}\geq t_{3}$ such that
 $$\left(\frac{y}{\pi}\right)'(t)=\frac{\left(L_{1}y(t)\right)^{\frac{1}{\alpha}}\pi(t)+y(t)\pi_{2}^{\frac{1}{\alpha}}(t)}
 {r_{1}^{\frac{1}{\alpha}}(t)\pi^{2}(t)}\leq 0,\  t\geq t_{4},$$
 and we conclude that $y/\pi$ is decreasing on $\big[t_{4}, \infty\big)$. Hence, $(2.12)$ holds.
 The proof is complete.
\end{proof}

\begin{corollary}
\ Assume $(H_{1})-(H_{4})$. Let $y\in S_{2}$  in $Lemma$ $ 2.1$ on $\big[t_{1}, \infty\big)$ for some $t_{1}\geq t_{0}$,
and  the function $\pi(t)$ be defined by $(2.10)$. If $(2.11)$ holds, then there exists $t_{2}\geq t_{1}$ such that
\begin{equation}\label{2.15}
y(t)\leq k\pi(t)
\end{equation}
for every constant $k>0$ and $t\geq t_{2}$.
\end{corollary}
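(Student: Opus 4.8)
The plan is to obtain the bound as an immediate consequence of Lemma 2.2. The hypotheses of the Corollary are verbatim those of Lemma 2.2: we are given $y\in S_{2}$ on $\big[t_{1},\infty\big)$, the function $\pi(t)$ is the one defined in $(2.10)$, and the divergence condition $(2.11)$ is assumed. Hence Lemma 2.2 applies and, in particular, furnishes relation $(2.13)$, namely $\lim_{t\rightarrow\infty} y(t)/\pi(t)=0$; indeed the lemma establishes the stronger conclusion $(2.12)$ that $y/\pi$ decreases monotonically to $0$ on $\big[t_{2},\infty\big)$.

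From here the proof is purely a matter of rewriting this zero limit in $\varepsilon$-form. Fix an arbitrary constant $k>0$. Since $\lim_{t\rightarrow\infty} y(t)/\pi(t)=0$, the definition of the limit guarantees a threshold $t_{2}\geq t_{1}$ for which $0\leq y(t)/\pi(t)<k$ whenever $t\geq t_{2}$. Observe that $\pi(t)>0$ on $\big[t_{0},\infty\big)$: by $(2.10)$ it is the tail integral of the strictly positive integrand $r_{1}^{-1/\alpha}\pi_{2}^{1/\alpha}$, and $(H_{2})$ together with the monotonicity bound $\pi_{2}(s)\leq\pi_{2}(t_{0})<\infty$ shows this integral converges, so $\pi(t)$ is finite and positive. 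Multiplying the inequality $y(t)/\pi(t)<k$ through by $\pi(t)>0$ then yields $y(t)\leq k\pi(t)$ for all $t\geq t_{2}$, which is precisely $(2.15)$.

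I would flag one point of interpretation rather than a genuine obstacle: the quantifiers in the statement must be read as ``for each $k>0$ there is a threshold $t_{2}=t_{2}(k)$'', since a single $t_{2}$ could not serve all $k>0$ simultaneously (that would force $y$ to vanish identically near infinity). With this reading the Corollary is nothing more than the standard reformulation of a zero limit, and all of the substantive analytic work — the monotonicity chain for $L_{1}y/\pi_{2}$ and $y/\pi$, and the use of $(2.11)$ to exclude a positive limit of $L_{2}y$ — has already been discharged inside Lemma 2.2. There is therefore no hard step remaining; the Corollary merely records the $\varepsilon$-consequence of $(2.13)$ in the form most convenient for the later estimates.
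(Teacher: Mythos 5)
Your proof is correct and matches the paper's intent exactly: the paper offers no separate proof for this Corollary because it is the immediate $\varepsilon$-consequence of Lemma 2.2's conclusion $(2.12)$, which is precisely how you derive it. Your remark that the threshold $t_{2}$ must depend on $k$ is also the right reading of the statement's quantifiers.
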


\begin{lemma}\label{2.3}
\ Assume $(H_{1})-(H_{4})$. Let $y\in S_{1}$  in $Lemma$ $ 2.1$ on $\big[t_{1}, \infty\big)$ for some $t_{1}\geq t_{0}$.
If $(2.11)$ holds, then there exists $t_{2}\geq t_{1}$ such that
\begin{equation}\label{2.16}
\frac{y(t)}{\pi_{1} (t)}\uparrow 
\end{equation}
for $t\geq t_{2}$.
\end{lemma}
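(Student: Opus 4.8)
The plan is to reduce the monotonicity assertion \eqref{2.16} to a single pointwise estimate, namely
$$ y(t)\geq \left(-L_{1}y(t)\right)^{\frac{1}{\alpha}}\pi_{1}(t), $$
valid for all large $t$. Once this estimate is in hand, the sign of $(y/\pi_{1})'$ becomes immediate, since $\pi_{1}'(t)=-r_{1}^{-\frac{1}{\alpha}}(t)$.

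First I would extract the two monotonicities encoded in the class $S_{1}$. From $L_{1}y=r_{1}\left(y'\right)^{\alpha}<0$ and the fact that $\alpha$ is a quotient of odd positive integers, $y'<0$, so $y$ is positive and decreasing and hence tends to a finite limit $\ell\geq 0$. From $L_{2}y=r_{2}\left(\left(L_{1}y\right)'\right)^{\beta}<0$ I obtain $\left(L_{1}y\right)'<0$, so that $-L_{1}y$ is positive and increasing on $\big[t_{1},\infty\big)$. These are the only structural inputs needed. To get the pointwise estimate, write $-y'(s)=\left(-L_{1}y(s)\right)^{\frac{1}{\alpha}}r_{1}^{-\frac{1}{\alpha}}(s)$; since $y$ is bounded and monotone, $y(t)=\ell+\int_{t}^{\infty}\left(-y'(s)\right)\text{d}s\geq\int_{t}^{\infty}\left(-L_{1}y(s)\right)^{\frac{1}{\alpha}}r_{1}^{-\frac{1}{\alpha}}(s)\,\text{d}s$. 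Because $-L_{1}y$ is increasing, $\left(-L_{1}y(s)\right)^{\frac{1}{\alpha}}\geq\left(-L_{1}y(t)\right)^{\frac{1}{\alpha}}$ for $s\geq t$; pulling this constant factor out and recognizing the remaining integral as $\pi_{1}(t)$ yields exactly the displayed estimate.

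Finally, differentiating and substituting $\pi_{1}'=-r_{1}^{-\frac{1}{\alpha}}$ together with $y'=-\left(-L_{1}y\right)^{\frac{1}{\alpha}}r_{1}^{-\frac{1}{\alpha}}$ gives
$$ \left(\frac{y}{\pi_{1}}\right)'(t)=\frac{r_{1}^{-\frac{1}{\alpha}}(t)\left[\,y(t)-\left(-L_{1}y(t)\right)^{\frac{1}{\alpha}}\pi_{1}(t)\,\right]}{\pi_{1}^{2}(t)}, $$
whose bracket is nonnegative by the estimate just proved. Hence $y/\pi_{1}$ is nondecreasing for $t\geq t_{2}:=t_{1}$, which is \eqref{2.16}.

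I expect the only delicate points to be the bookkeeping with the odd-power roots, so that all sign conventions in passing from $L_{1}y$ to $y'$ remain consistent, and the justification of the improper-integral representation of $y$; both are routine. It is worth remarking that this argument never actually invokes hypothesis \eqref{2.11}: the monotonicity of $y/\pi_{1}$ is forced by the $S_{1}$ structure alone. Condition \eqref{2.11} would enter only if one additionally wished to pin down $\lim_{t\to\infty}y(t)=0$, in parallel with $Lemma$ $2.2$, and I would not rely on it for \eqref{2.16} itself.
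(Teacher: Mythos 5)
Your proof is correct and follows essentially the same route as the paper: both derive the key pointwise estimate $y(t)\geq\left(-L_{1}y(t)\right)^{\frac{1}{\alpha}}\pi_{1}(t)$ (the paper's $(2.17)$) by integrating $-y'$ and pulling out the monotone factor $\left(-L_{1}y\right)^{\frac{1}{\alpha}}$, and then conclude by the identical computation of $\left(y/\pi_{1}\right)'$. Your side remark that hypothesis $(2.11)$ is never actually invoked is also accurate, as the paper's own proof makes no use of it either.
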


\begin{proof}
\ Let $y\in S_{1}$  in $Lemma$ $ 2.1$ on $\big[t_{1}, \infty\big)$ for some $t_{1}\geq t_{0}$.
It follows from the monotonicity of $L_{1}y$ that, for $\ell\geq t$,
$$
\aligned
y(t)
&\geq -\int^{\ell}_{t}r_{1}^{-\frac{1}{\alpha}}(s)\left(L_{1}y(s)\right)^{\frac{1}{\alpha}}\text{d}s\\
&\geq -\left(L_{1}y(t)\right)^{\frac{1}{\alpha}}\int^{\ell}_{t}r_{1}^{-\frac{1}{\alpha}}(s)\text{d}s.
\endaligned
$$
Letting $\ell$ to $\infty$, we have
\begin{equation}\label{2.17}
 y(t)\geq -\left(L_{1}y(t)\right)^{\frac{1}{\alpha}}\cdot\pi_{1} (t).
\end{equation}
From $(2.17)$, we conclude that $y/\pi_{1}$ is nondecreasing, since
\begin{equation}\label{2.18}
 \left(\frac{y}{\pi_{1}}\right)'(t)=\frac{\left(L_{1}y(t)\right)^{\frac{1}{\alpha}}\pi_{1}(t)+y(t)}
{r_{1}^{\frac{1}{\alpha}}(t)\pi_{1}^{2}(t)}\geq 0.
 \end{equation}
 The proof is complete.
\end{proof}

\begin{theorem}\label{thm2.2}
\ Assume $(H_{1})-(H_{4})$. If
\begin{equation}\label{2.19}
\int_{t_{0}}^{\infty}r_{1}^{-\frac{1}{\alpha}}(v)\left(\int_{t_{0}}^{v}r_{2}^{-\frac{1}{\beta}}(u)
\left(\int_{t_{0}}^{u}\pi^{\gamma}\left(\sigma(s)\right)q(s)\text{d}s\right)^{\frac{1}{\beta}}\text{d}u\right)^{\frac{1}{\alpha}}\text{d}v = \infty ,
\end{equation}
then $(1.1)$ has property $A$.
\end{theorem}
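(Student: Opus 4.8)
The plan is to argue by contradiction along the skeleton of the proof of Theorem 2.1, but to replace the crude bounds used there for the classes $S_{1}$ and $S_{2}$ by the sharp monotonicities of Lemma 2.2 and Lemma 2.3; the weight $\pi^{\gamma}(\sigma(s))$ in $(2.19)$ is tailored precisely to these. First I would record the preliminary reduction, exactly parallel to the passage from $(2.1)$ to $(2.3)$ at the start of Theorem 2.1: since $\pi_{1}(t_{0})<\infty$, convergence of the inner double integral in $(2.19)$ would force the whole triple integral to converge, so $(2.19)$ together with $(H_{2})$ gives $\int_{t_{0}}^{\infty}r_{2}^{-1/\beta}(u)\left(\int_{t_{0}}^{u}\pi^{\gamma}(\sigma(s))q(s)\,\text{d}s\right)^{1/\beta}\text{d}u=\infty$ and hence $\int_{t_{0}}^{\infty}\pi^{\gamma}(\sigma(s))q(s)\,\text{d}s=\infty$, i.e.\ condition $(2.11)$. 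Because $\pi(t)\le\pi_{2}^{1/\alpha}(t_{0})\pi_{1}(t)$ is bounded, the same comparison also returns $(2.2)$ and $(2.3)$. Thus Lemma 2.2, Lemma 2.3 and the $S_{3},S_{4}$ arguments of Theorem 2.1 are all at my disposal.

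Suppose now that $y$ is an eventually positive nonoscillatory solution and classify it by Lemma 2.1. The classes $S_{3}$ and $S_{4}$ are ruled out verbatim as in Theorem 2.1, using $(2.3)$ and $(2.2)$ respectively (equivalently, since $(2.19)$ implies $(2.1)$, Remark 2.1 already disposes of $S_{3}=S_{4}=\emptyset$). For $y\in S_{2}$, condition $(2.11)$ lets me invoke Lemma 2.2, whence $y(t)/\pi(t)\downarrow 0$; as $\pi(t)\to 0$ this yields $\lim_{t\to\infty}y(t)=0$, which is consistent with property $A$.

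The decisive case is $y\in S_{1}$, where $(2.19)$ is genuinely used. By Lemma 2.3, $y/\pi_{1}$ is nondecreasing, so putting $m:=y(t_{2})/\pi_{1}(t_{2})>0$ we obtain $y(\sigma(s))\ge m\,\pi_{1}(\sigma(s))$ for all large $s$ (recall $\sigma(s)\ge s$). Converting $\pi_{1}$ into $\pi$ via $\pi\le\pi_{2}^{1/\alpha}(t_{0})\pi_{1}$, this becomes $y^{\gamma}(\sigma(s))\ge c\,\pi^{\gamma}(\sigma(s))$ for some positive constant $c$, and substitution into $(1.1)$ gives $-L_{3}y(s)\ge c\,q(s)\pi^{\gamma}(\sigma(s))$. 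Integrating this from $t_{2}$ to $t$ bounds $-L_{2}y$ below by $c\int_{t_{2}}^{t}q(s)\pi^{\gamma}(\sigma(s))\,\text{d}s$; solving for $-(L_{1}y)'$ and integrating against $r_{2}^{-1/\beta}$, then solving for $-y'$ and integrating against $r_{1}^{-1/\alpha}$, I reproduce the chain $(2.4)$--$(2.6)$ of Theorem 2.1, now carrying the weight $\pi^{\gamma}(\sigma(s))$, with the constant re-emerging as $c^{1/(\alpha\beta)}$. By $(2.19)$ the resulting triple integral diverges, forcing $y(t)\to-\infty$ and contradicting $y>0$; hence $S_{1}=\emptyset$. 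Combining the three steps, every nonoscillatory solution lies in $S_{2}$ and tends to $0$, so $(1.1)$ has property $A$.

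The main obstacle I anticipate is the $S_{1}$ step. One must be careful to express the lower bound coming from Lemma 2.3 in terms of $\pi$ rather than $\pi_{1}$, so that it meshes with the exact weight occurring in $(2.19)$, and then to track the two fractional exponents $1/\beta$ and $1/\alpha$ through the three nested integrations so that the positive constant survives and the divergence of the triple integral is correctly inherited from $(2.19)$. The preliminary extraction of $(2.11)$ from $(2.19)$ is routine but should be written out explicitly, because Lemma 2.2 is reached only through it.
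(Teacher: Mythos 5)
Your proposal is correct and follows essentially the same route as the paper: derive $(2.3)$, $(2.2)$, $(2.1)$ and $(2.11)$ from $(2.19)$, dispose of $S_{3}$ and $S_{4}$ via the arguments of Theorem 2.1, get $\lim_{t\to\infty}y(t)=0$ in $S_{2}$, and kill $S_{1}$ by combining Lemma 2.3's bound $y\geq c\,\pi_{1}$ with three integrations of $(1.1)$. The only cosmetic difference is where the comparison $\pi\leq\pi_{2}^{1/\alpha}(t_{0})\,\pi_{1}$ is applied: the paper moves it into the hypothesis (converting $(2.19)$ into the $\pi_{1}$-weighted condition $(2.20)$), while you move it into the solution estimate ($y\geq m\pi_{1}\geq (m/K)\pi$) and use $(2.19)$ directly, and you cite Lemma 2.2 instead of the Theorem 2.1 argument for the $S_{2}$ limit — both are equivalent.
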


\begin{proof}
\ Suppose on the contrary and assume that $y$ is a nonoscillatory solution of $(1.1)$ on $\big[t_{0},\infty\big)$.
Without loss of generality, we may assume  that $y(t)>0$ and $ y\left(\sigma(t)\right)>0$
for $t\in \big[t_{1},\infty\big)\subseteqq\big[t_{0},\infty\big)$. Then we obtain that $y$ eventually belongs to
one  of the four classes in $Lemma$ $ 2.1$.
We will consider each of them separately.

Assume $y\in S_{1}$. Note that $(2.3)$ and $(2.11)$ are necessary for $(2.19)$ to be valid.
In fact, since the function $\int_{t_{0}}^{t}\pi^{\gamma}\left(\sigma(s)\right)q(s)\text{d}s$
is unbounded due to $(H_{2})$ and $\pi'<0$, $(2.3)$ and $(2.11)$ must be hold.
Furthermore, by $(2.19)$, we see that $(2.1)$ holds, and we also obtain
\begin{equation}\label{2.20}
\int_{t_{0}}^{\infty}r_{1}^{-\frac{1}{\alpha}}(v)\left(\int_{t_{0}}^{v}r_{2}^{-\frac{1}{\beta}}(u)
\left(\int_{t_{0}}^{u}\pi_{1}^{\gamma}\left(\sigma(s)\right)q(s)\text{d}s\right)^{\frac{1}{\beta}}\text{d}u\right)^{\frac{1}{\alpha}}\text{d}v = \infty.
\end{equation}
Then by $Lemma$ $ 2.3$, it follows from $(2.16)$ that  there exist  $c> 0$ and $t_{2}\geq t_{1}$
such that $y(t)\geq c\pi_{1}(t)$ for $t\geq t_{2}$. Substituting this inequality into $(1.1)$,
we obtain
\begin{equation}\label{2.21}
-\left(L_{2}y\right)'(t)=q(t)y^{\gamma}\left(\sigma(t)\right)\geq c^{\gamma} q(t)\pi_{1}^{\gamma}\left(\sigma(t)\right),
\ \ \left(t\geq t_{2}\right).
\end{equation}
Integrating $(2.21)$  from $t_{2}$ to $t$, we have
$$\aligned -L_{2}y(t)
&\geq -L_{2}y\left(t_{2}\right)+c^{\gamma}\int_{t_{2}}^{t}q(s)\pi_{1}^{\gamma}\left(\sigma(s)\right)\text{d}s\\
&\geq c^{\gamma}\int_{t_{2}}^{t}\pi_{1}^{\gamma}\left(\sigma(s)\right)q(s)\text{d}s,
\endaligned$$
that is,
$$
-\left(L_{1}y\right)'(t)\geq c^{\frac{\gamma}{\beta}}r_{2}^{-\frac{1}{\beta}}(t)
\left(\int_{t_{2}}^{t}\pi_{1}^{\gamma}\left(\sigma(s)\right)q(s)\text{d}s\right)^{\frac{1}{\beta}}.
$$
Integrating the above inequality   from $t_{2}$ to $t$, we have
$$\aligned -L_{1}y(t)
&\geq -L_{1}y\left(t_{2}\right)+c^{\frac{\gamma}{\beta}}\int_{t_{2}}^{t}r_{2}^{-\frac{1}{\beta}}(u)
\left(\int_{t_{2}}^{u}\pi_{1}^{\gamma}\left(\sigma(s)\right)q(s)\text{d}s\right)^{\frac{1}{\beta}}\text{d}u\\
&\geq c^{\frac{\gamma}{\beta}}\int_{t_{2}}^{t}r_{2}^{-\frac{1}{\beta}}(u)
\left(\int_{t_{2}}^{u}\pi_{1}^{\gamma}\left(\sigma(s)\right)q(s)\text{d}s\right)^{\frac{1}{\beta}}\text{d}u,
\endaligned$$
that is,
\begin{equation}\label{2.22}
-y'(t)\geq c^{\frac{\gamma}{\alpha\beta}}r_{1}^{-\frac{1}{\alpha}}(t)\left(\int_{t_{2}}^{t}r_{2}^{-\frac{1}{\beta}}(u)
\left(\int_{t_{2}}^{u}\pi_{1}^{\gamma}\left(\sigma(s)\right)
q(s)\text{d}s\right)^{\frac{1}{\beta}}\text{d}u\right)^{\frac{1}{\alpha}}.
\end{equation}
Integrating $(2.22)$  from $t_{2}$ to $t$ for the last time, and taking $(2.20)$ into account, we have
$$\aligned
y(t)
&\leq y\left(t_{2}\right)-c^{\frac{\gamma}{\alpha\beta}}
\int_{t_{2}}^{t}r_{1}^{-\frac{1}{\alpha}}(v)\left(\int_{t_{2}}^{v}r_{2}^{-\frac{1}{\beta}}(u)
\left(\int_{t_{2}}^{u}\pi_{1}^{\gamma}\left(\sigma(s)\right)q(s)
\text{d}s\right)^{\frac{1}{\beta}}\text{d}u\right)^{\frac{1}{\alpha}}\text{d}v\\
&\rightarrow -\infty ,\ \ \left(t\rightarrow\infty\right),
\endaligned$$
which contradicts to the positivity of $y$. Thus,  $S_{1}={\O}$.

Assume $y\in S_{2}$. Noting  $(2.1)$ is necessary for the validity of  $(2.20)$, we have $lim_{t\rightarrow\infty} y(t)=0$.

Finally, by noting  $(2.3)$ and $(2.2)$ are necessary for the validity of  $(2.19)$,
it follows immediately from $Remark$ $2.1$ that $ S_{3}=S_{4}={\O}$. The proof is complete.
\end{proof}

\begin{theorem}\label{thm2.3}
\ Assume $(H_{1})-(H_{4})$. If
\begin{equation}\label{2.23}
\limsup\limits_{t\rightarrow\infty}\pi_{1}^{\frac{\gamma}{\beta}}\left(\sigma(t)\right)
\int_{t_{1}}^{t}r_{2}^{-\frac{1}{\beta}}(u)
\left(\int_{t_{1}}^{u}q(s)\text{d}s\right)^{\frac{1}{\beta}}\text{d}u>1,
\end{equation}
for any $t_{1}\geq t_{0}$, and $\gamma=\alpha\beta$, then $(1.1)$ has property $A$.
\end{theorem}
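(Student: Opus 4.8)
The plan is to run the case analysis of Lemma 2.1, showing that the classes $S_1$, $S_3$, $S_4$ are all empty and that every solution in $S_2$ tends to $0$; this yields property $A$. The exponent relation $\gamma=\alpha\beta$ is needed \emph{only} to dispatch $S_1$, so the three remaining classes can be handled exactly as in Theorem 2.1 once the relevant divergences are extracted from (2.23).

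First I would deduce from (2.23) the two auxiliary facts (2.2) and (2.3) that power the arguments of Theorem 2.1. Since $(H_2)$ forces $\pi_1(t)\to 0$ and $\sigma(t)\ge t\to\infty$, the weight $\pi_1^{\gamma/\beta}(\sigma(t))$ tends to $0$. Hence, were the nondecreasing quantity $I(t):=\int_{t_1}^{t}r_2^{-1/\beta}(u)\big(\int_{t_1}^{u}q\big)^{1/\beta}\,du$ bounded, the product in (2.23) would tend to $0$; so (2.23) forces $I(t)\to\infty$, which is the divergence (2.2), and (2.2) forces (2.3) since a finite $\int q$ would bound $I(t)$ by $\pi_2(t_0)$ times a constant. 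With (2.2)--(2.3) available, $S_3$ and $S_4$ are empty and every $S_2$-solution satisfies $\lim y=0$, by repeating the $w$-function computation, the $L_1y\to-\infty$ estimate, and the $\ell=0$ argument of Theorem 2.1 (equivalently, Remark 2.1 disposes of $S_3,S_4$).

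The heart of the proof is $S_1$. For $y\in S_1$ the function $y$ is decreasing and $\sigma$ is nondecreasing, so $y(\sigma(s))\ge y(\sigma(t))$ for $s\le t$. Integrating $(L_2y)'=-q\,y^{\gamma}(\sigma)$, dividing by $r_2$ and taking $\beta$-th roots gives $-(L_1y)'(u)\ge r_2^{-1/\beta}(u)\,y^{\gamma/\beta}(\sigma(u))\big(\int_{t_1}^{u}q\big)^{1/\beta}$; a second integration from $t_1$ to $t$, again bounding $y^{\gamma/\beta}(\sigma(u))$ below by $y^{\gamma/\beta}(\sigma(t))$, yields
$$-L_1y(t)\ge y^{\gamma/\beta}(\sigma(t))\int_{t_1}^{t}r_2^{-1/\beta}(u)\Big(\int_{t_1}^{u}q(s)\,\mathrm{d}s\Big)^{1/\beta}\,\mathrm{d}u = y^{\gamma/\beta}(\sigma(t))\,I(t).$$
On the other hand, the monotonicity of $L_1y$ gives the estimate (2.17), namely $-L_1y(t)\le\big(y(t)/\pi_1(t)\big)^{\alpha}$, and the computation (2.18) shows $y/\pi_1$ is nondecreasing; since these use only (2.17), they are valid for any $S_1$-solution here, independently of (2.11). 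Combining the two bounds on $-L_1y(t)$ produces $y^{\gamma/\beta}(\sigma(t))\,I(t)\le y^{\alpha}(t)/\pi_1^{\alpha}(t)$.

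The final step is the cancellation that requires $\gamma=\alpha\beta$. From $y/\pi_1\uparrow$ and $\sigma(t)\ge t$ we get $y(\sigma(t))\ge\big(\pi_1(\sigma(t))/\pi_1(t)\big)y(t)$, so with $\gamma/\beta=\alpha$ the factor $y^{\gamma/\beta}(\sigma(t))=y^{\alpha}(\sigma(t))$ dominates $\big(\pi_1(\sigma(t))/\pi_1(t)\big)^{\alpha}y^{\alpha}(t)$; substituting and cancelling the common positive factor $y^{\alpha}(t)/\pi_1^{\alpha}(t)$ collapses the inequality to $\pi_1^{\gamma/\beta}(\sigma(t))\,I(t)\le 1$ for all large $t$, so the $\limsup$ is at most $1$, contradicting (2.23). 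This forces $S_1=\emptyset$, and together with the previous paragraph establishes property $A$. The main obstacle is orchestrating the successive integrations and the (2.17) bound so that the powers of $y$ on the two sides match; it is exactly $\gamma=\alpha\beta$ that makes $y^{\gamma/\beta}(\sigma(t))$ and $y^{\alpha}(t)$ cancel, and the argument stalls without it.
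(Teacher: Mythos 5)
Your proposal is correct and takes essentially the same route as the paper: the same case analysis via Lemma 2.1, the same disposal of $S_2,S_3,S_4$ through the arguments of Theorem 2.1 after extracting $(2.2)$--$(2.3)$ from $(2.23)$, and the same double integration giving $-L_1y(t)\ge y^{\gamma/\beta}\left(\sigma(t)\right)\int_{t_1}^{t}r_2^{-\frac{1}{\beta}}(u)\left(\int_{t_1}^{u}q(s)\,\text{d}s\right)^{\frac{1}{\beta}}\text{d}u$ combined with $(2.17)$ to contradict $(2.23)$. The only cosmetic difference is in the final cancellation: you use the nondecreasing of $y/\pi_1$ (from $(2.18)$, correctly observing that $(2.11)$ is not needed for it), whereas the paper applies $(2.17)$ at $\sigma(t)$ and uses the decreasing of $L_1y$; both collapse, under $\gamma=\alpha\beta$, to $\pi_1^{\gamma/\beta}\left(\sigma(t)\right)\int_{t_1}^{t}r_2^{-\frac{1}{\beta}}(u)\left(\int_{t_1}^{u}q(s)\,\text{d}s\right)^{\frac{1}{\beta}}\text{d}u\le 1$.
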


\begin{proof}
\ On the contrary, suppose  that $y$ is a nonoscillatory solution of $(1.1)$ on $\big[t_{0},\infty\big)$.
Without loss of generality, we may assume  that $y(t)>0$ and $ y\left(\sigma(t)\right)>0$
for $t\in \big[t_{1},\infty\big)\subseteqq\big[t_{0},\infty\big)$. Then we obtain that $y$ eventually belongs to
one  of the four classes in $Lemma$ $ 2.1$.
We will consider each of them separately.

First, note that $(2.23)$ along with $\left(H_{2}\right)$ implies $(2.3)$ and $(2.2)$. Then, by $Theorem$ $2.1$,
we get $ S_{3}=S_{4}={\O}$. Moreover, if $y\in S_{2}$, then $lim_{t\rightarrow\infty} y(t)=0$.

Next, we consider the class $S_{1}$. Assume $y\in S_{1}$. Integrating $(1.1)$  from $t_{1}$ to $t$
and using the decreasing of $y$, we have
\begin{equation}\label{2.24}
\aligned -L_{2}y(t)
&= -L_{2}y\left(t_{1}\right)+\int_{t_{1}}^{t}q(s)y^{\gamma}\left(\sigma(s)\right)\text{d}s
\geq \int_{t_{1}}^{t}q(s)y^{\gamma}\left(\sigma(s)\right)\text{d}s\\
&\geq y^{\gamma}\left(\sigma(t)\right)\int_{t_{1}}^{t}q(s)\text{d}s,
\endaligned
\end{equation}
that is,
\begin{equation}\label{2.25}
-\left(L_{1}y\right)'(t)\geq y^{\frac{\gamma}{\beta}}\left(\sigma(t)\right)r_{2}^{-\frac{1}{\beta}}(t)
\left(\int_{t_{1}}^{t}q(s)\text{d}s\right)^{\frac{1}{\beta}}.
\end{equation}
Integrating the above inequality   from $t_{1}$ to $t$, we have
\begin{equation}\label{2.26}
\aligned -L_{1}y(t)
&\geq -L_{1}y\left(t_{1}\right)+\int_{t_{1}}^{t}y^{\frac{\gamma}{\beta}}\left(\sigma(u)\right)r_{2}^{-\frac{1}{\beta}}(u)
\left(\int_{t_{1}}^{u}q(s)\text{d}s\right)^{\frac{1}{\beta}}\text{d}u\\
&\geq y^{\frac{\gamma}{\beta}}\left(\sigma(t)\right)\int_{t_{1}}^{t}r_{2}^{-\frac{1}{\beta}}(u)
\left(\int_{t_{1}}^{u}q(s)\text{d}s\right)^{\frac{1}{\beta}}\text{d}u.
\endaligned
\end{equation}
As in the proof of $Lemma$ $2.3$, we obtain $(2.17)$, which along with $(2.26)$ leads to
$$
\aligned -L_{1}y(t)
&\geq -\left(L_{1}y\right)^{\frac{\gamma}{\alpha\beta}}\left(\sigma(t)\right)
\pi_{1}^{\frac{\gamma}{\beta}}\left(\sigma(t)\right)\int_{t_{1}}^{t}r_{2}^{-\frac{1}{\beta}}(u)
\left(\int_{t_{1}}^{u}q(s)\text{d}s\right)^{\frac{1}{\beta}}\text{d}u\\
&\geq -\left(L_{1}y\right)^{\frac{\gamma}{\alpha\beta}}(t)
\pi_{1}^{\frac{\gamma}{\beta}}\left(\sigma(t)\right)\int_{t_{1}}^{t}r_{2}^{-\frac{1}{\beta}}(u)
\left(\int_{t_{1}}^{u}q(s)\text{d}s\right)^{\frac{1}{\beta}}\text{d}u.
\endaligned
$$
Taking $\gamma=\alpha\beta$ into account, the above inequality becomes
$$-L_{1}y(t)\geq -L_{1}y(t)\pi_{1}^{\frac{\gamma}{\beta}}\left(\sigma(t)\right)\int_{t_{1}}^{t}r_{2}^{-\frac{1}{\beta}}(u)
\left(\int_{t_{1}}^{u}q(s)\text{d}s\right)^{\frac{1}{\beta}}\text{d}u,$$
which results in a contradiction
$$\limsup\limits_{t\rightarrow\infty}\pi_{1}^{\frac{\gamma}{\beta}}\left(\sigma(t)\right)
\int_{t_{1}}^{t}r_{2}^{-\frac{1}{\beta}}(u)
\left(\int_{t_{1}}^{u}q(s)\text{d}s\right)^{\frac{1}{\beta}}\text{d}u\leq 1.$$
Thus,  $S_{1}={\O}$. The proof is complete.
\end{proof}

\begin{theorem}\label{thm2.4}
\ Assume $(H_{1})-(H_{4})$ and suppose that $(2.1)$ holds. If
\begin{equation}\label{2.27}
\limsup\limits_{t\rightarrow\infty}\pi_{1}^{\frac{\gamma}{\beta}}\left(\sigma(t)\right)
\int_{t_{0}}^{t}r_{2}^{-\frac{1}{\beta}}(u)
\left(\int_{t_{0}}^{u}q(s)\text{d}s\right)^{\frac{1}{\beta}}\text{d}u>1,
\end{equation}
 and $\gamma=\alpha\beta$, then $(1.1)$ has property $A$.
\end{theorem}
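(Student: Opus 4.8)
The plan is to follow the scheme of Theorem 2.3, using the standing hypothesis $(2.1)$ to dispose of the classes $S_{2}$, $S_{3}$, $S_{4}$ and reserving the new condition $(2.27)$ together with $\gamma=\alpha\beta$ for the class $S_{1}$. Suppose, for contradiction, that $y$ is an eventually positive solution with $y(t)>0$ and $y(\sigma(t))>0$ on $[t_{1},\infty)$; by Lemma 2.1, $y$ belongs eventually to one of $S_{1},\dots,S_{4}$. Since $(2.1)$ is assumed, Theorem 2.1 (indeed the very computations in its proof) already yields $S_{3}=S_{4}=\emptyset$ and $\lim_{t\to\infty}y(t)=0$ whenever $y\in S_{2}$; both outcomes are compatible with property $A$. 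Thus the whole burden falls on showing that the class $S_{1}$ cannot occur.

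For $y\in S_{1}$ I would reproduce the chain of integrations from the proof of Theorem 2.3. Integrating $(1.1)$ from $t_{1}$ to $t$ and using that $y$ is decreasing while $\sigma(t)\ge t$ gives the analogue of $(2.24)$--$(2.26)$, namely
$$-L_{1}y(t)\ge y^{\frac{\gamma}{\beta}}(\sigma(t))\int_{t_{1}}^{t}r_{2}^{-\frac{1}{\beta}}(u)\left(\int_{t_{1}}^{u}q(s)\,\mathrm{d}s\right)^{\frac{1}{\beta}}\mathrm{d}u.$$
Inserting the estimate $(2.17)$ evaluated at $\sigma(t)$, using the monotonicity of $L_{1}y$ together with $\sigma(t)\ge t$ to replace $-L_{1}y(\sigma(t))$ by $-L_{1}y(t)$, and invoking $\gamma=\alpha\beta$, I arrive at
$$1\ge \pi_{1}^{\frac{\gamma}{\beta}}(\sigma(t))\int_{t_{1}}^{t}r_{2}^{-\frac{1}{\beta}}(u)\left(\int_{t_{1}}^{u}q(s)\,\mathrm{d}s\right)^{\frac{1}{\beta}}\mathrm{d}u,$$
which is precisely the bound obtained in Theorem 2.3, but with inner and outer integrals starting at $t_{1}$ rather than at $t_{0}$.

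The \emph{main obstacle}, and the only genuine difference from Theorem 2.3, is that $(2.27)$ prescribes the limit superior of the same quantity with lower limit $t_{0}$, whereas the integrations above naturally produce $t_{1}\ge t_{0}$. I would bridge this gap as follows. Since $(2.1)$ forces $(2.3)$, i.e. $\int^{\infty}q(s)\,\mathrm{d}s=\infty$, we have $\int_{t_{1}}^{u}q\to\infty$ and hence $\int_{t_{0}}^{u}q/\int_{t_{1}}^{u}q\to1$ as $u\to\infty$; thus for each $\varepsilon>0$ there is $T_{\varepsilon}\ge t_{1}$ with $\int_{t_{0}}^{u}q\le(1+\varepsilon)\int_{t_{1}}^{u}q$ for $u\ge T_{\varepsilon}$. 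Splitting the outer integral at $T_{\varepsilon}$ and using nonnegativity of the integrand to enlarge the lower limit back to $t_{1}$, this gives
$$\int_{t_{0}}^{t}r_{2}^{-\frac{1}{\beta}}(u)\left(\int_{t_{0}}^{u}q\right)^{\frac{1}{\beta}}\mathrm{d}u\le C_{\varepsilon}+(1+\varepsilon)^{\frac{1}{\beta}}\int_{t_{1}}^{t}r_{2}^{-\frac{1}{\beta}}(u)\left(\int_{t_{1}}^{u}q\right)^{\frac{1}{\beta}}\mathrm{d}u$$
for a finite constant $C_{\varepsilon}$. Because $(H_{2})$ and $\sigma(t)\ge t$ yield $\pi_{1}(\sigma(t))\to0$, the term $\pi_{1}^{\gamma/\beta}(\sigma(t))\,C_{\varepsilon}$ vanishes; multiplying the last inequality by $\pi_{1}^{\gamma/\beta}(\sigma(t))$ and taking $\limsup_{t\to\infty}$, the $t_{1}$-bound $\le1$ from the previous paragraph gives $\limsup_{t\to\infty}\pi_{1}^{\gamma/\beta}(\sigma(t))\int_{t_{0}}^{t}(\cdots)\le(1+\varepsilon)^{1/\beta}$. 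Letting $\varepsilon\to0^{+}$ contradicts $(2.27)$, whence $S_{1}=\emptyset$ and $(1.1)$ has property $A$. I expect this uniform comparison of the two lower limits---absorbing the finite head through the decay $\pi_{1}(\sigma(t))\to0$---to be the only step requiring care; the rest is a transcription of Theorems 2.1 and 2.3.
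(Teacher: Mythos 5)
Your proof is correct, and it coincides with the paper's argument up to the single point where Theorem 2.4 genuinely differs from Theorem 2.3: how to reconcile the lower limit $t_{1}$ produced by the integrations with the lower limit $t_{0}$ fixed in $(2.27)$. The paper resolves this \emph{inside} the integral inequalities rather than after them: since $(2.1)$ is assumed, the argument of Theorem 2.1 gives $\lim_{t\rightarrow\infty}y(t)=0$ also for $y\in S_{1}$, so in
\begin{equation*}
-L_{2}y(t)\geq -L_{2}y(t_{1})-y^{\gamma}\left(\sigma(t)\right)\int_{t_{0}}^{t_{1}}q(s)\,\mathrm{d}s
+y^{\gamma}\left(\sigma(t)\right)\int_{t_{0}}^{t}q(s)\,\mathrm{d}s
\end{equation*}
the positive constant $-L_{2}y(t_{1})$ eventually dominates the vanishing term $y^{\gamma}\left(\sigma(t)\right)\int_{t_{0}}^{t_{1}}q(s)\,\mathrm{d}s$, yielding $-L_{2}y(t)\geq y^{\gamma}\left(\sigma(t)\right)\int_{t_{0}}^{t}q(s)\,\mathrm{d}s$ for large $t$; the same absorption is performed once more after the next integration, and then the chain of Theorem 2.3 closes with all integrals already based at $t_{0}$. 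You instead close the argument with the $t_{1}$-based bound $\pi_{1}^{\gamma/\beta}\left(\sigma(t)\right)\int_{t_{1}}^{t}(\cdots)\leq 1$ and afterwards compare the $t_{0}$- and $t_{1}$-based quantities analytically, using $(2.3)$ to get $\int_{t_{0}}^{u}q\leq(1+\varepsilon)\int_{t_{1}}^{u}q$ eventually and $\pi_{1}\left(\sigma(t)\right)\rightarrow 0$ to absorb the finite head $C_{\varepsilon}$. Both routes are sound. The paper's absorption trick is shorter and explains why the full hypothesis $(2.1)$ (which forces $y\rightarrow 0$ in $S_{1}$) is natural here; your comparison is more modular, shows that the $S_{1}$ contradiction actually requires only $(2.3)$ together with $(H_{2})$, and in effect proves that under $(2.3)$ the $\limsup$ in $(2.27)$ is independent of the lower integration limit---thereby clarifying the precise relationship between conditions $(2.23)$ and $(2.27)$.
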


\begin{proof}
\ Using $Theorem$ $2.1$, we have $ S_{3}=S_{4}={\O}$, and if $y\in S_{2}$,
then $lim_{t\rightarrow\infty} y(t)=0$.

Now, we only need to consider the class $S_{1}$. Assume $y\in S_{1}$. As in the proof of $Theorem$ $2.3$,
we arrive at
$$
\aligned -L_{2}y(t)
&\geq -L_{2}y\left(t_{1}\right)+y^{\gamma}\left(\sigma(t)\right)\int_{t_{1}}^{t}q(s)\text{d}s\\
&\geq -L_{2}y\left(t_{1}\right)-y^{\gamma}\left(\sigma(t)\right)\int_{t_{0}}^{t_{1}}q(s)\text{d}s
+y^{\gamma}\left(\sigma(t)\right)\int_{t_{0}}^{t}q(s)\text{d}s,
\endaligned
$$
Since $lim_{t\rightarrow\infty} y(t)=0$, there exist $t_{2}>t_{1}$ such that
$$-L_{2}y\left(t_{1}\right)-y^{\gamma}\left(\sigma(t)\right)\int_{t_{0}}^{t_{1}}q(s)\text{d}s>0$$
for $t\geq t_{2}$. Thus, for $t\geq t_{2}$, we have
$$-L_{2}y(t)\geq y^{\gamma}\left(\sigma(t)\right)\int_{t_{0}}^{t}q(s)\text{d}s.$$
Integrating the above inequality   from $t_{2}$ to $t$, we have
$$
\aligned -L_{1}y(t)\geq
&-L_{1}y\left(t_{2}\right)-y^{\frac{\gamma}{\beta}}\left(\sigma(t)\right)\int_{t_{0}}^{t_{2}}r_{2}^{-\frac{1}{\beta}}(u)
\left(\int_{t_{0}}^{u}q(s)\text{d}s\right)^{\frac{1}{\beta}}\text{d}u\\
&+ y^{\frac{\gamma}{\beta}}\left(\sigma(t)\right)\int_{t_{0}}^{t}r_{2}^{-\frac{1}{\beta}}(u)
\left(\int_{t_{0}}^{u}q(s)\text{d}s\right)^{\frac{1}{\beta}}\text{d}u.
\endaligned
$$
There also exist $t_{3}>t_{2}$ such that
$$-L_{1}y\left(t_{2}\right)-y^{\frac{\gamma}{\beta}}\left(\sigma(t)\right)\int_{t_{0}}^{t_{2}}r_{2}^{-\frac{1}{\beta}}(u)
\left(\int_{t_{0}}^{u}q(s)\text{d}s\right)^{\frac{1}{\beta}}\text{d}u>0$$
for $t\geq t_{3}$. Thus, for $t\geq t_{3}$, we obtain
$$-L_{1}y(t)\geq
y^{\frac{\gamma}{\beta}}\left(\sigma(t)\right)\int_{t_{0}}^{t}r_{2}^{-\frac{1}{\beta}}(u)
\left(\int_{t_{0}}^{u}q(s)\text{d}s\right)^{\frac{1}{\beta}}\text{d}u.$$
The rest of proof is the same and hence we omit it. Finally, we obtain  $S_{1}={\O}$. The proof is complete.
\end{proof}

As following, we will establish various oscillation criteria for $(1.1)$.

\begin{theorem}\label{thm2.5}
\ Assume $(H_{1})-(H_{4})$. If
\begin{equation}\label{2.28}
\liminf\limits_{t\rightarrow\infty}\int_{t}^{\sigma(t)}r_{1}^{-\frac{1}{\alpha}}(v)
\left(\int_{t_{0}}^{v}r_{2}^{-\frac{1}{\beta}}(u)
\left(\int_{t_{0}}^{u}q(s)\text{d}s\right)^{\frac{1}{\beta}}\text{d}u\right)^{\frac{1}{\alpha}}\text{d}v
>\frac{1}{\text{e}} ,
\end{equation}
and
\begin{equation}\label{2.29}
\liminf\limits_{t\rightarrow\infty}\int_{t}^{\sigma\left(\sigma(t)\right)}r_{1}^{-\frac{1}{\alpha}}(v)
\left(\int_{v}^{\sigma(t)}r_{2}^{-\frac{1}{\beta}}(u)
\left(\int_{u}^{\sigma(t)}q(s)\text{d}s\right)^{\frac{1}{\beta}}\text{d}u\right)^{\frac{1}{\alpha}}\text{d}v
 >\frac{1}{\text{e}}
\end{equation}
hold, and moreover, $\alpha\beta=\gamma$,
then $(1.1)$ is oscillatory.
\end{theorem}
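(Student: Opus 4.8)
The plan is to argue by contradiction: assume $(1.1)$ has an eventually positive solution $y$, so that after enlarging $t_0$ we have $y(t)>0$ and $y(\sigma(t))>0$, and by Lemma 2.1 the solution lies eventually in exactly one of the classes $S_1,S_2,S_3,S_4$. The first, routine, step is to observe that $(2.28)$ already forces the ``property $A$'' structure. Indeed, since $\pi_2(t_0)<\infty$ and $\sigma(t)\ge t$, the hypothesis $(2.28)$ implies $(2.1)$: if the outer integral converged, the windowed integral $\int_t^{\sigma(t)}(\cdots)\,\mathrm{d}v$ would tend to $0$, contradicting a positive $\liminf$. Hence Theorem 2.1 applies and gives $S_3=S_4=\varnothing$, while every $y\in S_2$ satisfies $\lim_{t\to\infty}y(t)=0$. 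It therefore remains to \emph{exclude the two decreasing (Kneser-type) classes} $S_1$ and $S_2$ entirely, and this is exactly what the two $1/\mathrm{e}$-conditions $(2.28)$ and $(2.29)$ are designed to do, one for each class.

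For the class $S_1$ I would use $(2.28)$. Here $y$ is positive and decreasing while $-L_1y$ and $-L_2y$ are positive; integrating $(1.1)$ successively on $[t_1,t]$ and discarding the favourably signed boundary terms reproduces the chain of estimates from the proof of Theorem 2.1, but now \emph{without} replacing $y(\sigma(\cdot))$ by a positive constant. Instead I keep the advanced argument and use the monotonicity of $y$ together with $\sigma'\ge0$ to isolate a single value $y(\sigma(t))$ from the triple integral. The hypothesis $\gamma=\alpha\beta$ is what makes this work: the exponent $\gamma/(\alpha\beta)$ produced by the three integrations (one power $1/\beta$ and one power $1/\alpha$) collapses to $1$, so that the nonlinear term $y^{\gamma}(\sigma)$ reappears \emph{linearly}. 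The outcome is a first-order advanced differential inequality whose coefficient, integrated over the window $[t,\sigma(t)]$, is precisely the integrand of $(2.28)$; applying the sharp oscillation criterion for first-order advanced equations (the classical $1/\mathrm{e}$ threshold) in the form relevant to this monotonicity class yields a contradiction, so $S_1=\varnothing$.

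For the class $S_2$ I would use $(2.29)$. By Lemma 2.2 and its Corollary we have $y/\pi\downarrow0$, and since $y$, $L_1y$ and $L_2y$ all tend to $0$ the Kneser representation
\[ y(t)=\int_t^{\infty}r_1^{-1/\alpha}(v)\Big(\int_v^{\infty}r_2^{-1/\beta}(u)\Big(\int_u^{\infty}q(s)\,y^{\gamma}(\sigma(s))\,\mathrm{d}s\Big)^{1/\beta}\mathrm{d}u\Big)^{1/\alpha}\mathrm{d}v \]
holds. Truncating the three tail integrals at $\sigma(t)$ and using that $y(\sigma(\cdot))$ is nonincreasing to bound $y(\sigma(s))$ from below by $y(\sigma(\sigma(t)))$ produces, again thanks to $\gamma=\alpha\beta$, a relation of the form $y(t)\ge P(t)\,y(\sigma(\sigma(t)))$ in which $P$ is exactly the iterated kernel of $(2.29)$; the double composition $\sigma\circ\sigma$ is forced by the two outer integrations needed to recover $y$ from $L_2y$, which is why $(2.29)$ carries $\sigma(\sigma(t))$ in its upper limit. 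Reading this as a first-order advanced functional/differential inequality and invoking the comparison lemma with threshold $1/\mathrm{e}$ (in the form appropriate to the decreasing solution $y$) excludes an eventually positive $y$, so $S_2=\varnothing$.

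Combining the three steps, no nonoscillatory solution can exist, whence $(1.1)$ is oscillatory. The easy parts are the repeated integrations and the implication $(2.28)\Rightarrow(2.1)$; the genuine work — and the place I expect the main obstacle — is the passage from the third-order equation to the two first-order advanced inequalities with the \emph{correct} monotone auxiliary quantity and the \emph{correct} direction, so that the sharp constant $1/\mathrm{e}$ may legitimately be invoked. In particular, one must check that the coefficients assembled after truncation, and the harmless lower-limit shift from $t_0$ to $t_1$, do not alter the relevant $\liminf$, and that the linearization $\gamma=\alpha\beta$ is applied exactly where the three fractional powers compound.
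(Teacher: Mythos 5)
Your proposal reproduces the paper's own strategy essentially step for step: the same classification via Lemma 2.1, the same observation that $(2.28)$ forces $(2.1)$ so that Theorem 2.1 and Remark 2.1 eliminate $S_{3}$ and $S_{4}$, the same reduction of the class $S_{1}$ to a first-order advanced differential inequality $y'(t)+Q(t)y(\sigma(t))\leq 0$ whose coefficient $Q$ is the integrand of $(2.28)$, and for $S_{2}$ a reduction whose kernel and iterated argument $\sigma(\sigma(t))$ match $(2.29)$. The only mechanical differences are that the paper obtains its $S_{2}$ inequality $(2.32)$ by integrating $(1.1)$ forward from $t$ to $u=\sigma(t)$, whereas you integrate backward to $+\infty$, and that your appeal to Lemma 2.2 and Corollary 2.1 is not legitimate here, since their hypothesis $(2.11)$ is not among the assumptions of Theorem 2.5 (this slip is curable: the sign conditions defining $S_{2}$ yield your Kneser-type inequality directly, as an inequality rather than an identity).

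The genuine gap is exactly the step you flagged as ``the place I expect the main obstacle'': the invocation of the $1/\mathrm{e}$ threshold. The classical criteria of this type cover the delay inequality $x'(t)+p(t)x(\tau(t))\leq 0$ with $\tau(t)\leq t$, and the advanced inequality $x'(t)-p(t)x(\sigma(t))\geq 0$ with $\sigma(t)\geq t$. What you (and the paper) produce is the mixed form $y'(t)+Q(t)y(\sigma(t))\leq 0$ with $\sigma(t)\geq t$ and $y$ positive decreasing, and for this form no such criterion can exist: $y(t)=e^{-t}$ satisfies $y'(t)+y(t+1)=e^{-t}\left(e^{-1}-1\right)<0$ although $\int_{t}^{t+1}1\,\text{d}s=1>1/\mathrm{e}$; fast exponential decay always accommodates an advanced argument with this sign pattern. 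Your $S_{2}$ step is in even worse shape, since the pure functional inequality $y(t)\geq P(t)\,y\left(\sigma(\sigma(t))\right)$ is satisfied by every sufficiently rapidly decreasing positive $y$ no matter how large $P$ is. This defect is inherited from the paper, whose citation of $[4,\ Theorem\ 2.4.1]$ for $(2.30)$ and $(2.32)$ is equally invalid, and it cannot be repaired, because the statement itself is false: for $\alpha=\beta=\gamma=1$, $r_{1}(t)=r_{2}(t)=e^{2t}$, $q(t)=3e^{4t+1}$, $\sigma(t)=t+1$, all of $(H_{1})-(H_{4})$, $\gamma=\alpha\beta$, $(2.28)$ and $(2.29)$ hold (the two liminfs equal $3\mathrm{e}/8$ and $\tfrac{3\mathrm{e}}{8}\left(\tfrac{e^{4}-e^{-4}}{4}-e^{2}+e^{-2}+2\right)$, both well above $1/\mathrm{e}$), yet $y(t)=e^{-t}$ solves $(1.1)$ and lies in $S_{1}$. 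The correct way to exclude decreasing solutions of an \emph{advanced} equation is to work with an increasing auxiliary function such as $z=-L_{2}y$, which satisfies a genuinely advanced inequality $z'(t)\geq\tilde{q}(t)z(\sigma(t))$ with $\tilde{q}=q\cdot(\text{kernel})^{\gamma}$; that route leads to a $1/\mathrm{e}$ condition whose kernel sits inside the innermost integral, which is materially different from $(2.28)$ and $(2.29)$.
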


\begin{proof}
\ Suppose that $y$ is a nonoscillatory solution of $(1.1)$ on $\big[t_{0},\infty\big)$.
Without loss of generality, we may assume that $t_{1}\geq t_{0}$ such that $y(t)>0$ and $ y\left(\sigma(t)\right)>0$
for $t\geq t_{1}$. Then we obtain that $y$ eventually belongs to
one  of the four classes in $Lemma$ $ 2.1$. As following, we  consider each of these classes separately.

Assume $y\in S_{1}$. As in the proof of $Theorem$ $2.3$, we arrive at $(2.26)$, that is
$$y'+\left(r_{1}^{-\frac{1}{\alpha}}(t)\left(\int_{t_{1}}^{t}r_{2}^{-\frac{1}{\beta}}(u)
\left(\int_{t_{1}}^{u}q(s)\text{d}s\right)^{\frac{1}{\beta}}\text{d}u\right)^{\frac{1}{\alpha}}\right)
y^{\frac{\gamma}{\alpha\beta}}\left(\sigma(t)\right)\leq 0.$$
Using $\alpha\beta=\gamma$, the above inequality becomes
\begin{equation}\label{2.30}
y'+\left(r_{1}^{-\frac{1}{\alpha}}(t)\left(\int_{t_{1}}^{t}r_{2}^{-\frac{1}{\beta}}(u)
\left(\int_{t_{1}}^{u}q(s)\text{d}s\right)^{\frac{1}{\beta}}\text{d}u\right)^{\frac{1}{\alpha}}\right)
y\left(\sigma(t)\right)\leq 0.
\end{equation}
However, it is well-known (see, e.g., $[4,\  Theorem \ 2.4.1]$) that  condition $(2.28)$
implies oscillation of $(2.30)$. Thus, it is a contradiction with our initial assumption.
Then $S_{1}={\O}$.

Assume $y\in S_{2}$. Integrating $(1.1)$ from $t$ to $u$ $(t<u)$, and using the monotonicity of $y$, we obtain
$$L_{2}y(t)\geq L_{2}y(t)-L_{2}y(u)=\int_{t}^{u}q(s)y^{\gamma}\left(\sigma(s)\right)\text{d}s
\geq y^{\gamma}\left(\sigma(u)\right)\int_{t}^{u}q(s)\text{d}s,$$
that is,
$$\left(L_{1}y\right)'(t)
\geq y^{\frac{\gamma}{\beta}}\left(\sigma(u)\right)r_{2}^{-\frac{1}{\beta}}(t)
\left(\int_{t}^{u}q(s)\text{d}s\right)^{\frac{1}{\beta}}.$$
Integrating the above inequality from $t$ to $u$, we have
$$-L_{1}y(t)\geq y^{\frac{\gamma}{\beta}}\left(\sigma(u)\right)
\int_{t}^{u}r_{2}^{-\frac{1}{\beta}}(x)\left(\int_{x}^{u}q(s)\text{d}s\right)^{\frac{1}{\beta}}\text{d}x,$$
i.e.,
$$-y'(t)\geq y^{\frac{\gamma}{\alpha\beta}}\left(\sigma(u)\right)r_{1}^{-\frac{1}{\alpha}}(t)
\left(\int_{t}^{u}r_{2}^{-\frac{1}{\beta}}(x)\left(\int_{x}^{u}q(s)\text{d}s\right)^{\frac{1}{\beta}}\text{d}x
\right)^{\frac{1}{\alpha}}.$$
Taking $\gamma=\alpha\beta$ into account, we have
\begin{equation}\label{2.31}
-y'(t)\geq y\left(\sigma(u)\right)r_{1}^{-\frac{1}{\alpha}}(t)
\left(\int_{t}^{u}r_{2}^{-\frac{1}{\beta}}(x)\left(\int_{x}^{u}q(s)\text{d}s\right)^{\frac{1}{\beta}}\text{d}x
\right)^{\frac{1}{\alpha}}.
\end{equation}
Setting $u=\sigma(t)$ in $(2.31)$, we get
$$-y'(t)\geq y\left(\sigma\left(\sigma(t)\right)\right)r_{1}^{-\frac{1}{\alpha}}(t)
\left(\int_{t}^{\sigma(t)}r_{2}^{-\frac{1}{\beta}}(x)\left(\int_{x}^{\sigma(t)}q(s)
\text{d}s\right)^{\frac{1}{\beta}}\text{d}x
\right)^{\frac{1}{\alpha}},$$
i.e.,
\begin{equation}\label{2.32}
y'(t)+y\left(\sigma\left(\sigma(t)\right)\right)r_{1}^{-\frac{1}{\alpha}}(t)
\left(\int_{t}^{\sigma(t)}r_{2}^{-\frac{1}{\beta}}(x)\left(\int_{x}^{\sigma(t)}q(s)
\text{d}s\right)^{\frac{1}{\beta}}\text{d}x
\right)^{\frac{1}{\alpha}}\leq 0.
\end{equation}
However, condition $(2.29)$  implies oscillation of $(2.32)$, (see, e.g., $[4,\  Theorem \ 2.4.1]$).  It
means that $(1.1)$ cannot have a positive solution $y$ in the class $S_{2}$, which is a contradiction.
Thus, $S_{2}={\O}$.

Finally,  noting that $(2.1)$ is necessary for the validity of $(2.28)$, it follows immediately from $Remark$ $ 2.1$ that
$S_{3}=S_{4}={\O}$.
The proof is complete.
\end{proof}

The following results are simple consequences of those $Theorem$  mentioned above and $Corollary$ $2.1$.

\begin{theorem}\label{thm2.6}
\ Assume $(H_{1})-(H_{4})$. If $\gamma=\alpha\beta$, $(2.11)$ and $(2.28)$ hold, then all positive solutions
of $(1.1)$ satisfy $(2.15)$ for any $k>0$ and $t$  large enough.
\end{theorem}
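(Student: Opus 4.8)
The plan is to show that the hypotheses force every eventually positive solution into the single class $S_2$, and then to quote Corollary 2.1, whose conclusion is exactly $(2.15)$. By Lemma 2.1 any eventually positive solution $y$ of $(1.1)$ belongs to one of the four classes $S_1,S_2,S_3,S_4$, so it suffices to eliminate $S_1$, $S_3$ and $S_4$ and to apply Corollary 2.1 to the survivor.

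First I would discard $S_3$ and $S_4$. Since $(2.1)$ is necessary for $(2.28)$ --- if $(2.1)$ failed, the integrand in $(2.28)$ would be the tail of a convergent integral and the inner $\int_t^{\sigma(t)}$ would tend to $0$, forcing the $\liminf$ below $1/\text{e}$ --- Theorem 2.1 applies and Remark 2.1 gives $S_3=S_4=\emptyset$. Next I would discard $S_1$ by repeating the $S_1$-part of the proof of Theorem 2.5: for $y\in S_1$ the successive integrations producing $(2.26)$, combined with the Lemma 2.3 bound $(2.17)$ and the normalisation $\gamma=\alpha\beta$, yield the first-order advanced inequality $(2.30)$, whose oscillation under $(2.28)$ (by $[4,\text{ Theorem }2.4.1]$) contradicts the positivity of $y$. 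This part of the argument uses only $(2.28)$ and $\gamma=\alpha\beta$, not $(2.29)$, so it is available under the present hypotheses.

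Consequently every eventually positive solution of $(1.1)$ lies in $S_2$. Because $(2.11)$ is assumed, Corollary 2.1 applies to such a solution and furnishes, for each $k>0$, a time $t_2$ beyond which $y(t)\le k\pi(t)$, which is precisely $(2.15)$. I do not expect a real obstacle here: the statement is essentially an assembly of previously proved facts, and the only step demanding a moment's care is the implication $(2.28)\Rightarrow(2.1)$ that activates Remark 2.1, after which the $S_1$-elimination borrowed from Theorem 2.5 together with Corollary 2.1 closes the argument.
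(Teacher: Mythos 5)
Your proposal is correct and matches the paper's intent: the paper states Theorem 2.6 without proof as a ``simple consequence'' of the preceding theorems and Corollary 2.1, and your assembly — eliminating $S_{3},S_{4}$ via the implication $(2.28)\Rightarrow(2.1)$ and Remark 2.1, eliminating $S_{1}$ by the $(2.26)$-to-$(2.30)$ argument from Theorem 2.5 (which indeed needs only $(2.28)$ and $\gamma=\alpha\beta$, not $(2.29)$), and applying Corollary 2.1 under $(2.11)$ to the surviving class $S_{2}$ — is exactly that argument. The only trivial inaccuracy is that the passage from $(2.26)$ to $(2.30)$ does not actually require the bound $(2.17)$, but this does not affect the validity of the proof.
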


\begin{theorem}\label{thm2.7}
\ Assume $(H_{1})-(H_{4})$. If $\gamma=\alpha\beta$, $(2.19)$ and $(2.29)$ hold,  then $(1.1)$ is oscillatory.
\end{theorem}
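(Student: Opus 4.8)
The plan is to derive the conclusion by a class-by-class synthesis of the two property-$A$ results already available, using Lemma~\ref{2.1} as the organizing principle. Suppose for contradiction that $(1.1)$ possesses a nonoscillatory solution; replacing $y$ by $-y$ if necessary, we may assume it is eventually positive, with $y(t)>0$ and $y(\sigma(t))>0$ for $t\ge t_{1}$. By Lemma~\ref{2.1}, $y$ eventually belongs to exactly one of the four classes $S_{1},S_{2},S_{3},S_{4}$, so it suffices to prove that, under $\gamma=\alpha\beta$ together with $(2.19)$ and $(2.29)$, every one of these classes is empty.

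The first step uses hypothesis $(2.19)$ through Theorem~\ref{thm2.2}. Its proof establishes more than the bare property-$A$ statement: it shows that $(2.19)$ forces $S_{1}=\emptyset$ (by the three successive integrations that drive $y$ to $-\infty$ once $(2.20)$ is in force), while $S_{3}=S_{4}=\emptyset$ follows from Remark~2.1, which is applicable because $(2.1)$ is implied by $(2.19)$. Consequently, after this step the only class that could still contain our positive solution $y$ is $S_{2}$.

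The second step eliminates $S_{2}$ by importing, essentially verbatim, the $S_{2}$ part of the proof of Theorem~\ref{thm2.5}. That argument is modular: it relies only on the sign pattern and monotonicities defining $S_{2}$, on the equality $\gamma=\alpha\beta$, and on condition $(2.29)$; it never invokes $(2.28)$, which was required in Theorem~\ref{thm2.5} purely to annihilate $S_{1}$, nor does it need the fact that $y\to 0$. Explicitly, integrating $(1.1)$ twice on $[t,u]$ and using that $L_{2}y$ decreases yields $(2.31)$; substituting $u=\sigma(t)$ and invoking $\gamma=\alpha\beta$ gives the first-order advanced differential inequality $(2.32)$. Condition $(2.29)$, via the oscillation test [4,\ Theorem~2.4.1], forces every solution of the associated advanced equation to oscillate, contradicting the existence of the positive $y$; hence $S_{2}=\emptyset$.

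With all four classes empty, no eventually positive solution exists, so $(1.1)$ has no nonoscillatory solution and is therefore oscillatory. I expect the only genuinely delicate point to be the modularity assertion in the second step: one must verify carefully that the $S_{2}$ elimination in Theorem~\ref{thm2.5} is logically independent of $(2.28)$ and rests solely on $(2.29)$ and $\gamma=\alpha\beta$, so that it can be grafted onto the present setting where $S_{1}$ is instead removed by the stronger integral condition $(2.19)$. Everything else is a bookkeeping combination of Theorems~\ref{thm2.2} and~\ref{thm2.5}, requiring no new estimates.
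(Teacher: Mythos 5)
Your proof is correct and is essentially the paper's own argument: the paper gives no separate proof of Theorem 2.7, presenting it as a ``simple consequence'' of the preceding results, which is exactly your synthesis --- the proof of Theorem 2.2 shows $(2.19)$ forces $S_{1}=S_{3}=S_{4}=\emptyset$, and the $S_{2}$-part of the proof of Theorem 2.5 (which, as you rightly observe, uses only the sign conditions of $S_{2}$, the equality $\gamma=\alpha\beta$, and $(2.29)$, never $(2.28)$) forces $S_{2}=\emptyset$. Your careful check of that modularity is the only point of substance, and it holds.
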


\begin{remark}
\ If
\begin{equation}\label{2.33}
\liminf\limits_{t\rightarrow\infty}\int_{t}^{\sigma(t)}r_{1}^{-\frac{1}{\alpha}}(v)
\left(\int_{v}^{\sigma(t)}r_{2}^{-\frac{1}{\beta}}(u)
\left(\int_{u}^{\sigma(t)}q(s)\text{d}s\right)^{\frac{1}{\beta}}\text{d}u\right)^{\frac{1}{\alpha}}\text{d}v
 >\frac{1}{\text{e}},
\end{equation}
holds, we have the validity of $(2.29)$. Thus, the conclusions of $Theorem$ $2.5$ and $2.7$ remain
valid if the condition $(2.29)$ is replaced by $(2.33)$.
\end{remark}

\begin{theorem}\label{thm2.8}
\ Assume $(H_{1})-(H_{4})$. If $\gamma=\alpha\beta$, $(2.23)$ and $(2.33)$ hold,  then $(1.1)$ is oscillatory.
\end{theorem}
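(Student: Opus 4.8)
The plan is to argue by contradiction, emptying all four Kiguradze-type classes of Lemma 2.1 so that no nonoscillatory solution can exist. Since (1.1) is invariant under $y\mapsto-y$, it suffices to assume $y$ is eventually positive with $y(\sigma(t))>0$ on some $[t_1,\infty)$; by Lemma 2.1 such a solution eventually lies in exactly one of $S_1,S_2,S_3,S_4$. The whole theorem will then reduce to showing each of these classes is empty, and the two hypotheses (2.23) and (2.33) split the work cleanly between them.

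First I would dispose of three classes at once by invoking Theorem 2.3. Since $\gamma=\alpha\beta$ and (2.23) are assumed (and (2.23) is required to hold for every $t_1\geq t_0$, in particular for the entry point of our solution), Theorem 2.3 applies and yields property $A$. Reading off its proof, (2.23) together with $(H_2)$ first forces the auxiliary divergences (2.3) and (2.2); Theorem 2.1 then gives $S_3=S_4=\emptyset$, the $S_1$ argument produces a contradiction so that $S_1=\emptyset$, and every $y\in S_2$ must satisfy $\lim_{t\to\infty}y(t)=0$. Hence after this step only the class $S_2$ remains to be excluded, and it is precisely here that the extra hypothesis (2.33) must do its work.

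The decisive step is therefore to prove $S_2=\emptyset$, for which I would reproduce verbatim the $S_2$ analysis from the proof of Theorem 2.5. Assuming $y\in S_2$, one integrates (1.1) over $[t,u]$ with $t<u$, uses the positivity of $L_2y$ and the monotonicity of $y$ and $\sigma$ to replace $y(\sigma(s))$ by $y(\sigma(u))$, and integrates once more to reach inequality (2.31); putting $u=\sigma(t)$ and using $\gamma=\alpha\beta$ (so that $y^{\gamma/(\alpha\beta)}=y$) then yields the first-order advanced differential inequality (2.32) for $y$. By Remark 2.2, condition (2.33) implies (2.29), and (2.29) is exactly the criterion that, via the cited oscillation theorem $[4,\ \text{Theorem }2.4.1]$, guarantees every solution of (2.32) oscillates. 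This contradicts the positivity of $y$, so $S_2=\emptyset$ as well.

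With all four classes empty there is no eventually positive, hence no nonoscillatory, solution, and (1.1) is oscillatory. I do not expect a genuine obstacle here: the statement is essentially a bookkeeping combination of Theorem 2.3 (which empties $S_1,S_3,S_4$ and forces $y\to0$ on $S_2$) with the $S_2$ portion of Theorem 2.5, strengthened through Remark 2.2 from (2.29) to (2.33). The only points requiring care are verifying that (2.23) and (2.33) jointly supply all the background integral divergences —(2.1), (2.2), (2.3)— that those earlier proofs invoke silently, and checking that the inequality $\sigma(\sigma(t))\geq\sigma(t)$ underlying Remark 2.2 is used in the correct direction so that (2.33) indeed dominates (2.29).
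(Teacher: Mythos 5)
Your proof is correct and is exactly the argument the paper intends: Theorem 2.8 is stated there without proof as a ``simple consequence'' of the preceding results, namely Theorem 2.3 (which under $\gamma=\alpha\beta$ and $(2.23)$ empties $S_{1}$, $S_{3}$, $S_{4}$ and forces $y\to 0$ in $S_{2}$) combined with the $S_{2}$ argument of Theorem 2.5, where Remark 2.2 and $\sigma(\sigma(t))\geq\sigma(t)$ let $(2.33)$ stand in for $(2.29)$. Your assembly of these pieces, including the check that $(2.23)$ supplies the background divergences $(2.2)$ and $(2.3)$, matches the paper's intended route.
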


\begin{theorem}\label{thm2.9}
\ Assume $(H_{1})-(H_{4})$. If $\gamma=\alpha\beta$, $(2.1)$, $(2.27)$ and $(2.33)$ hold,  then $(1.1)$ is oscillatory.
\end{theorem}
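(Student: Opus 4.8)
The plan is to argue by contradiction and to assemble the result from two pieces that are already in place: the property-$A$ conclusion of Theorem 2.4, which disposes of the classes $S_1$, $S_3$, $S_4$ and forces every solution in $S_2$ to vanish, together with the elimination of $S_2$ carried out in the proof of Theorem 2.5. Accordingly, I would begin by assuming that (1.1) has a nonoscillatory solution; without loss of generality it is eventually positive, so by Lemma 2.1 it eventually belongs to exactly one of $S_1$, $S_2$, $S_3$, $S_4$.

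First I would invoke Theorem 2.4. Its hypotheses are precisely $(H_1)$--$(H_4)$, condition (2.1), condition (2.27), and $\gamma=\alpha\beta$, all of which are standing assumptions here. Reading off its proof rather than the bare phrase ``property $A$'', one extracts the sharper structural information that $S_3=S_4=\emptyset$ (via Theorem 2.1), that the $S_1$-argument of Theorem 2.4 gives $S_1=\emptyset$, and that every $y\in S_2$ satisfies $\lim_{t\to\infty}y(t)=0$. Thus the only class not yet excluded is $S_2$.

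It then remains to rule out $S_2$, and this is exactly where (2.33) enters. By Remark 2.3, condition (2.33) implies (2.29). I would therefore reproduce the $S_2$-portion of the proof of Theorem 2.5: integrating (1.1) twice over an interval $[t,u]$, using the monotonicity of the decreasing function $y$ and the equality $\gamma=\alpha\beta$, one is led to the advanced differential inequality (2.32); since (2.29) holds, the oscillation criterion cited there ([4, Theorem 2.4.1]) forces (2.32)---and hence $y$---to oscillate, contradicting $y\in S_2$. Consequently $S_2=\emptyset$ as well, every one of the four classes of Lemma 2.1 is empty, no eventually positive solution can exist, and (1.1) is oscillatory.

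Since Theorem 2.9 is essentially a combination of earlier results, it involves no new estimates, and there is no deep obstacle; the only point demanding care is bookkeeping. Concretely, one must verify that the subsidiary integral conditions silently consumed by the invoked results are genuinely supplied by the hypotheses: condition (2.1) is needed to apply Theorem 2.4 (and is assumed outright), while the passage from (2.33) to (2.29) provided by Remark 2.3 is what feeds the $S_2$-step of Theorem 2.5. Once these two inputs are confirmed available, the proof is simply the union of the two exclusion arguments.
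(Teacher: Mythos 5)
Your proposal is correct and takes essentially the same route the paper intends: Theorem 2.9 is stated there without proof as a ``simple consequence'' of the preceding results, namely Theorem 2.4 (whose proof, under $(2.1)$, $(2.27)$ and $\gamma=\alpha\beta$, yields $S_{1}=S_{3}=S_{4}=\emptyset$ and $\lim_{t\to\infty}y(t)=0$ for $y\in S_{2}$) combined with the $S_{2}$-elimination from the proof of Theorem 2.5, using that $(2.33)$ implies $(2.29)$. The only slip is a label: that implication is Remark 2.2 in the paper, not Remark 2.3.
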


In order to prove the following conclusions, we recall an auxiliary result which is taken
from Wu et al. $[5,\ Lemma\ 2.3]$.

\begin{lemma}\label{2.4}$([5,\ Lemma\ 2.3])$
\ Let $g(u)=Au-B\left(u-C\right)^{\frac{\alpha +1}{\alpha}}$, where $B>0$, $A$ and $C$ are constants,
and $\alpha$ is a quotient of odd positive numbers. Then $g$ attains its maximum value on $ \mathbb{R}$
at $u^{*}=C+\left(\frac{\alpha A}{(\alpha+1)B}\right)^{\alpha}$ and
\begin{equation}\label{2.34}
\max_{u\in \mathbb{R}}g(u)=g\left(u^{*}\right)=AC+\frac{\alpha^{\alpha}}{(\alpha +1)^{\alpha +1}}
\cdot\frac{A^{\alpha +1}}{B^{\alpha}}.
\end{equation}
for $t\geq t_{2}$.
\end{lemma}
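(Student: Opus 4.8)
The plan is to treat this as a one-variable calculus optimization, since $g$ is a function of $u$ alone and hypothesis $(H_1)$ (namely $\alpha$ a quotient of odd positive integers) ensures that each power appearing is well defined for every real argument. First I would differentiate, obtaining
$$g'(u) = A - B\,\frac{\alpha+1}{\alpha}\,(u-C)^{\frac{1}{\alpha}},$$
and locate the critical points by solving $g'(u)=0$, i.e. $(u-C)^{1/\alpha} = \frac{\alpha A}{(\alpha+1)B}$. Since $u \mapsto (u-C)^{1/\alpha}$ is a bijection of $\mathbb{R}$ onto itself, this equation has the single root $u^{*} = C + \left(\frac{\alpha A}{(\alpha+1)B}\right)^{\alpha}$, matching the asserted location of the maximizer.

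Next I would verify that $u^{*}$ is in fact a global maximizer, not merely a critical point. Writing $\alpha = p/q$ in lowest terms with $p$ and $q$ odd, both relevant exponents have an even numerator, since $p\pm q$ is even: $\frac{\alpha+1}{\alpha} = \frac{p+q}{p}$ and $\frac{1-\alpha}{\alpha} = \frac{q-p}{p}$. The first fact forces $(u-C)^{(\alpha+1)/\alpha} \to +\infty$ as $u \to \pm\infty$, so $g(u) \to -\infty$ in both directions; together with the uniqueness of the critical point this shows $u^{*}$ attains the global maximum. (Equivalently, the second parity fact gives $(u^{*}-C)^{(1-\alpha)/\alpha} > 0$, so $g''(u^{*}) = -B\frac{\alpha+1}{\alpha^{2}}(u^{*}-C)^{(1-\alpha)/\alpha} < 0$.)

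Finally I would substitute back and simplify. Putting $\tau := u^{*}-C = \left(\frac{\alpha A}{(\alpha+1)B}\right)^{\alpha}$, so that $\tau^{1/\alpha} = \frac{\alpha A}{(\alpha+1)B}$, the subtracted term telescopes as $B\tau^{(\alpha+1)/\alpha} = B\tau\cdot\tau^{1/\alpha} = \frac{\alpha A}{\alpha+1}\tau$, whence $g(u^{*}) = A(C+\tau) - \frac{\alpha A}{\alpha+1}\tau = AC + \frac{A}{\alpha+1}\tau$; expanding $\tau$ gives exactly $AC + \frac{\alpha^{\alpha}}{(\alpha+1)^{\alpha+1}}\cdot\frac{A^{\alpha+1}}{B^{\alpha}}$.

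The one genuinely delicate point --- and where I expect the main care to be needed --- is establishing global rather than merely local maximality: the parity bookkeeping on the exponent $(\alpha+1)/\alpha$ is what prevents the power term from running off to $-\infty$, and it is precisely here that $(H_1)$ enters; the differentiation and the final algebraic collapse are routine.
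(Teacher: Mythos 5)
Your proof is correct, but there is nothing in the paper to compare it against: the paper does not prove this lemma, it simply recalls it from Wu, Erbe and Peterson ($[5,\ Lemma\ 2.3]$) and uses it as a black box. What you supply is the standard calculus argument, and you handle correctly the one point that genuinely needs care. Writing $\alpha=p/q$ in lowest terms with $p,q$ odd, the map $x\mapsto x^{1/\alpha}$ is a bijection of $\mathbb{R}$, so the critical point $u^{*}=C+\left(\frac{\alpha A}{(\alpha+1)B}\right)^{\alpha}$ exists, is unique, and is well defined even when $A\leq 0$; and the exponent $\frac{\alpha+1}{\alpha}=\frac{p+q}{p}$ has even numerator over odd denominator, so $\left(u-C\right)^{\frac{\alpha+1}{\alpha}}\geq 0$ grows superlinearly in both directions, whence $g(u)\rightarrow -\infty$ as $u\rightarrow\pm\infty$ and the unique critical point is the global maximizer. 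Your coercivity route is in fact preferable to your parenthetical second-derivative check, which degenerates when $A=0$ (there $u^{*}=C$ and $g''$ is not usable, though the formula $\max g=AC$ still holds); since the paper later applies the lemma with $A=\rho'/\rho$, which need not have a fixed sign, covering all real $A$ is not a vacuous generality. The final substitution, via $\tau=u^{*}-C$ and $B\tau^{\frac{\alpha+1}{\alpha}}=\frac{\alpha A}{\alpha+1}\tau$, correctly reproduces the stated value $AC+\frac{\alpha^{\alpha}}{(\alpha+1)^{\alpha+1}}\cdot\frac{A^{\alpha+1}}{B^{\alpha}}$. One cosmetic remark: the trailing phrase ``for $t\geq t_{2}$'' in the statement is a copying artifact in the paper (no variable $t$ occurs in the lemma), and you were right to ignore it.
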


\begin{theorem}\label{thm2.10}
\ Assume $(H_{1})-(H_{4})$ and $\gamma=\alpha\beta$. If $(2.3)$ and $(2.33)$ hold, and also there
exists a function  $\rho\in C^{1}\left([t_{0},\infty),(0,\infty)\right)$ such that
\begin{equation}\label{2.35}
\limsup\limits_{t\rightarrow\infty}\left\{\frac{\pi_{1}^{\alpha}(t)}{\rho (t)}
\int_{T}^{t}\left(\rho(u)r_{2}^{-\frac{1}{\beta}}(u)
\left(\int_{T}^{u}q(s)\text{d}s\right)^{\frac{1}{\beta}}\left(\frac{\pi_{1}\left(\sigma(u)\right)}{\pi_{1}(u)}\right)
^{\alpha}-\frac{r_{1}(u)\left(\rho'(u)\right)^{\alpha +1}}{(\alpha +1)^{\alpha +1}\rho^{\alpha}(u)}
\right)\text{d}u\right\}>1,
\end{equation}
for any $T\in [t_{0},\infty)$,  then $(1.1)$ is oscillatory.
\end{theorem}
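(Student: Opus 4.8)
The plan is to argue by contradiction: suppose $(1.1)$ has an eventually positive solution $y$, so that by Lemma 2.1 (after possibly replacing $y$ by $-y$) $y$ belongs eventually to one of $S_1,\dots,S_4$, and show that each class is empty. Three of the four classes can be disposed of with machinery already available. The class $S_2$ is void because, by Remark 2.2, hypothesis $(2.33)$ implies $(2.29)$, and the argument given for $S_2$ in the proof of Theorem 2.5 then rules it out. The classes $S_3$ and $S_4$ are void by the reasoning of Theorem 2.1: $S_3$ is eliminated by the Riccati function $w=L_2y/y^\gamma(\sigma)$ together with the divergence $(2.3)$, and $S_4$ by two successive integrations of $(1.1)$ together with the divergence $(2.2)$, both of which are available under the standing hypotheses. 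Consequently the whole theorem reduces to proving $S_1=\emptyset$, and this is the only place where the new condition $(2.35)$ is used.

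For $S_1$, fix $T\ge t_1$ (permissible since $(2.35)$ is assumed for every $T$). Since $y$ is decreasing, so is $y\circ\sigma$; integrating $(1.1)$ on $[T,t]$ gives $-L_2y(t)\ge y^\gamma(\sigma(t))\int_T^t q(s)\,ds$, and, using $\gamma=\alpha\beta$, $(L_1y)'(t)\le -r_2^{-1/\beta}(t)\,y^\alpha(\sigma(t))\big(\int_T^t q\big)^{1/\beta}$. Next I invoke Lemma 2.3 — whose conclusion $(2.16)$, namely $y/\pi_1\uparrow$, is valid for every member of $S_1$ — to bound $y^\alpha(\sigma(t))/y^\alpha(t)\ge\big(\pi_1(\sigma(t))/\pi_1(t)\big)^\alpha$, which is exactly the weight occurring in $(2.35)$.

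The heart of the argument is the Riccati substitution $w:=\rho\,L_1y/y^\alpha$, which is negative on $S_1$. Differentiating and inserting $y'=(L_1y/r_1)^{1/\alpha}$ produces three terms, $w'=(\rho'/\rho)w+\rho(L_1y)'/y^\alpha-\alpha\,w^{(\alpha+1)/\alpha}/(\rho r_1)^{1/\alpha}$; the middle term is at most $-\rho r_2^{-1/\beta}(\pi_1(\sigma)/\pi_1)^\alpha(\int_T^t q)^{1/\beta}$ by the previous paragraph. Applying Lemma 2.4 with $A=\rho'/\rho$, $B=\alpha/(\rho r_1)^{1/\alpha}$, $C=0$ bounds the remaining two terms by $r_1(\rho')^{\alpha+1}/\big((\alpha+1)^{\alpha+1}\rho^\alpha\big)$, so that $w'(t)\le r_1(\rho')^{\alpha+1}/\big((\alpha+1)^{\alpha+1}\rho^\alpha\big)-\rho r_2^{-1/\beta}(\pi_1(\sigma)/\pi_1)^\alpha(\int_T^t q)^{1/\beta}$. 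Integrating from $T$ to $t$ makes $-w(t)$ dominate the integral in $(2.35)$; on the other hand, raising $(2.17)$ to the power $\alpha$ yields the a priori bound $-w(t)=\rho(-L_1y)/y^\alpha\le\rho(t)/\pi_1^\alpha(t)$. Multiplying by $\pi_1^\alpha(t)/\rho(t)$ and combining the two estimates forces the bracket in $(2.35)$ to be $\le 1$ for all large $t$, contradicting $(2.35)$; hence $S_1=\emptyset$ and $(1.1)$ is oscillatory.

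I expect the only genuinely delicate point to be the constant bookkeeping in the third paragraph: confirming that Lemma 2.4 converts $(\rho'/\rho)w-\alpha\,w^{(\alpha+1)/\alpha}/(\rho r_1)^{1/\alpha}$ into precisely the remainder $r_1(\rho')^{\alpha+1}/((\alpha+1)^{\alpha+1}\rho^\alpha)$, and checking that the sign conventions for $w<0$ are consistent with the even-over-odd exponent $(\alpha+1)/\alpha$; everything else is a routine repetition of the integrations already performed in Theorems 2.1, 2.3 and 2.5.
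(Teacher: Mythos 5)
Your proposal follows the paper's proof essentially step for step: the same reduction to the four classes of Lemma 2.1, the same disposal of $S_2$ via Remark 2.2 and the $S_2$-argument of Theorem 2.5, of $S_3,S_4$ via the arguments of Theorem 2.1 under $(2.3)$ and $(2.2)$, and the same Riccati--Lemma 2.4 computation against $S_1$ using $(2.16)$--$(2.17)$, ending in the contradiction with $(2.35)$. The only (cosmetic) difference is that the paper works with the shifted, nonnegative Riccati function $w=\rho\left(\frac{L_1y}{y^\alpha}+\frac{1}{\pi_1^\alpha}\right)$ and applies Lemma 2.4 with $C=\rho/\pi_1^\alpha$, absorbing the resulting exact-derivative term $\left(\rho/\pi_1^\alpha\right)'$ after integrating, whereas you use the unshifted $w=\rho L_1y/y^\alpha\le 0$ with $C=0$ and invoke the bound $-w\le\rho/\pi_1^\alpha$ coming from $(2.17)$ only at the very end; the two computations are equivalent.
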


\begin{proof}
\ On the contrary, suppose  that $y$ is a nonoscillatory solution of $(1.1)$ on $\big[t_{0},\infty\big)$.
Without loss of generality, we may assume  that $y(t)>0$ and $ y\left(\sigma(t)\right)>0$
for $t\in \big[t_{1},\infty\big)\subseteqq\big[t_{0},\infty\big)$. Then we know that $y$ eventually belongs to
one  of the four classes in $Lemma$ $ 2.1$.
We will consider each of them separately.

Assume $y\in S_{1}$. Let's define the generalized Riccati Substitution
\begin{equation}\label{2.36}
w:=\rho\left(\frac{L_{1}y}{y^{\frac{\gamma}{\beta}}}+\frac{1}{\pi_{1}^{\frac{\gamma}{\beta}}}\right)
=\rho\left(\frac{L_{1}y}{y^{\alpha}}+\frac{1}{\pi_{1}^{\alpha}}\right)
\ \ \text{on}\ \big[t_{1},\infty\big).
\end{equation}
Taking $(2.17)$ into account, we see that $w\geq 0$ on $\big[t_{1},\infty\big)$.
Differentiating $(2.36)$, we arrive at
\begin{equation}\label{2.37}
\aligned w'
&=\frac{\rho'}{\rho}w+\rho\frac{\left(L_{1}y\right)'}{y^{\alpha}}
-\alpha\rho\frac{\left(L_{1}y\right)\cdot y'}{y^{\alpha +1}}
+\rho(-\alpha)\frac{-1}{\pi_{1}^{\alpha +1}\cdot r_{1}^{\frac{1}{\alpha}}}\\
&=\frac{\rho'}{\rho}w+\rho\frac{\left(L_{1}y\right)'}{y^{\alpha}}
-\frac{\alpha\rho}{r_{1}^{\frac{1}{\alpha}}}\left(\frac{L_{1}y}{y^{\alpha }}\right)^{\frac{\alpha +1}{\alpha}}
+\frac{\alpha\rho}{r_{1}^{\frac{1}{\alpha}}\pi_{1}^{\alpha +1}}\\
&=\frac{\rho'}{\rho}w+\rho\frac{\left(L_{1}y\right)'}{y^{\alpha}}
-\frac{\alpha}{\left(r_{1}\rho\right)^{\frac{1}{\alpha}}}
\left(w-\frac{\rho}{\pi_{1}^{\alpha }}\right)^{\frac{\alpha +1}{\alpha}}
+\frac{\alpha\rho}{r_{1}^{\frac{1}{\alpha}}\pi_{1}^{\alpha +1}}.
\endaligned
\end{equation}
As  the proof in $Theorem$ $2.3$, we arrive at  $(2.25)$.  Using $(2.16)$ in $(2.25)$,
we deduce that the inequality
\begin{equation}\label{2.38}
\aligned \left(L_{1}y\right)'(t)
&\leq -y^{\frac{\gamma}{\beta}}\left(\sigma(t)\right)r_{2}^{-\frac{1}{\beta}}(t)
\left(\int_{t_{2}}^{t}q(s)\text{d}s\right)^{\frac{1}{\beta}}\\
&\leq -y^{\alpha}\left(\sigma(t)\right)r_{2}^{-\frac{1}{\beta}}(t)
\left(\int_{t_{2}}^{t}q(s)\text{d}s\right)^{\frac{1}{\beta}}\\
&\leq -\left(\frac{\pi_{1}\left(\sigma(t)\right)}{\pi_{1}(t)}\right)^{\alpha}r_{2}^{-\frac{1}{\beta}}(t)
\left(\int_{t_{2}}^{t}q(s)\text{d}s\right)^{\frac{1}{\beta}}y^{\alpha}(t)
\endaligned
\end{equation}
holds for $t\geq t_{2}$, where $t_{2}\in \big[t_{1},\infty\big)$ is large enough. From
$(2.37)$ and $(2.38)$, it follows that
$$
\aligned
w'(t)\leq
&-\rho(t)\frac{\pi_{1}^{\alpha}\left(\sigma(t)\right)}{\pi_{1}^{\alpha}(t)}r_{2}^{-\frac{1}{\beta}}(t)
\left(\int_{t_{2}}^{t}q(s)\text{d}s\right)^{\frac{1}{\beta}}
+\frac{\rho'(t)}{\rho(t)}w(t)\\
&-\frac{\alpha}{\left(r_{1}(t)\rho(t)\right)^{\frac{1}{\alpha}}}
\left(w(t)-\frac{\rho(t)}{\pi_{1}^{\alpha }(t)}\right)^{\frac{\alpha +1}{\alpha}}
+\frac{\alpha\rho(t)}{r_{1}^{\frac{1}{\alpha}}(t)\pi_{1}^{\alpha +1}(t)}.
\endaligned
$$
Let
$$A:=\frac{\rho'(t)}{\rho(t)},\ \ B:=\frac{\alpha}{\left(r_{1}(t)\rho(t)\right)^{\frac{1}{\alpha}}},
\ \ C:=\frac{\rho(t)}{\pi_{1}^{\alpha }(t)}.$$
Using $(2.34)$ with the above inequality, we have
\begin{equation}\label{2.39}
\aligned
w'(t)
&\leq-\rho(t)r_{2}^{-\frac{1}{\beta}}(t)\left(\int_{t_{2}}^{t}q(s)\text{d}s\right)^{\frac{1}{\beta}}
\frac{\pi_{1}^{\alpha}\left(\sigma(t)\right)}{\pi_{1}^{\alpha}(t)}
+\frac{\rho'(t)}{\pi_{1}^{\alpha }(t)}
+\frac{r_{1}(t)\left(\rho'(t)\right)^{\alpha +1}}{(\alpha +1)^{\alpha +1}\rho^{\alpha}(t)}
+\frac{\alpha\rho(t)}{r_{1}^{\frac{1}{\alpha}}(t)\pi_{1}^{\alpha +1}(t)}\\
&=-\rho(t)r_{2}^{-\frac{1}{\beta}}(t)\left(\int_{t_{2}}^{t}q(s)\text{d}s\right)^{\frac{1}{\beta}}
\frac{\pi_{1}^{\alpha}\left(\sigma(t)\right)}{\pi_{1}^{\alpha}(t)}
+\left(\frac{\rho}{\pi_{1}^{\alpha }}\right)'(t)
+\frac{r_{1}(t)\left(\rho'(t)\right)^{\alpha +1}}{(\alpha +1)^{\alpha +1}\rho^{\alpha}(t)}.
\endaligned
\end{equation}
Integrating $(2.39)$ from $t_{2}$ to $t$, we obtain
$$
\aligned
&\int_{t_{2}}^{t}\left(\rho(u)r_{2}^{-\frac{1}{\beta}}(u)
\left(\int_{t_{2}}^{u}q(s)\text{d}s\right)^{\frac{1}{\beta}}\left(\frac{\pi_{1}\left(\sigma(u)\right)}{\pi_{1}(u)}\right)
^{\alpha}-\frac{r_{1}(u)\left(\rho'(u)\right)^{\alpha +1}}{(\alpha +1)^{\alpha +1}\rho^{\alpha}(u)}
\right)\text{d}u\\
&-\frac{\rho(t)}{\pi_{1}(t)}+\frac{\rho\left(t_{2}\right)}{\pi_{1}\left(t_{2}\right)}
\leq w\left(t_{2}\right)-w(t).
\endaligned
$$
Taking the definition of $w$ into account, we get
\begin{equation}\label{2.40}
\aligned
&\int_{t_{2}}^{t}\left(\rho(u)r_{2}^{-\frac{1}{\beta}}(u)
\left(\int_{t_{2}}^{u}q(s)\text{d}s\right)^{\frac{1}{\beta}}\left(\frac{\pi_{1}\left(\sigma(u)\right)}{\pi_{1}(u)}\right)
^{\alpha}-\frac{r_{1}(u)\left(\rho'(u)\right)^{\alpha +1}}{(\alpha +1)^{\alpha +1}\rho^{\alpha}(u)}
\right)\text{d}u\\
&\leq \rho\left(t_{2}\right)\frac{L_{1}y\left(t_{2}\right)}{y^{\alpha}\left(t_{2}\right)}
-\rho(t)\frac{L_{1}y(t)}{y^{\alpha}(t)}.
\endaligned
\end{equation}
On the other hand, from $(2.17)$, it follows that
$$-\frac{\rho(t)}{\pi_{1}^{\alpha }(t)}\leq \rho(t)\frac{L_{1}y(t)}{y^{\alpha}(t)}\leq 0.$$
Substituting the above estimate into $(2.40)$, we get
\begin{equation}\label{2.41}
\aligned
&\int_{t_{2}}^{t}\left(\rho(u)r_{2}^{-\frac{1}{\beta}}(u)
\left(\int_{t_{2}}^{u}q(s)\text{d}s\right)^{\frac{1}{\beta}}\left(\frac{\pi_{1}\left(\sigma(u)\right)}{\pi_{1}(u)}\right)
^{\alpha}-\frac{r_{1}(u)\left(\rho'(u)\right)^{\alpha +1}}{(\alpha +1)^{\alpha +1}\rho^{\alpha}(u)}
\right)\text{d}u\\
&\leq \frac{\rho(t)}{\pi_{1}^{\alpha }(t)}.
\endaligned
\end{equation}
Multiplying $(2.41)$ by $\pi_{1}^{\alpha }(t)/\rho(t)$ and taking the $limsup$ on both
sides of the resulting inequality, we obtain a contradiction with $(2.35)$. Thus, $S_{1}={\O}$.

Assume $y\in S_{2}$. As in the proof of $Theorem$ $2.5$, one arrives at contradiction with $(2.33)$.
Thus, $S_{2}={\O}$.

As following, we show $S_{3}=S_{4}={\O}$. Since $(2.3)$ holds due to $(H_{2})$, then the function
$$\int_{t_{0}}^{t}r_{2}^{-\frac{1}{\beta}}(u)
\left(\int_{t_{0}}^{u}q(s)\text{d}s\right)^{\frac{1}{\beta}}\text{d}u$$
is unbounded, and so $(2.2)$ holds. The rest of proof proceeds in the same manner as that of  $Theorem$ $2.1$.
The proof is complete.
\end{proof}

Depending on the appropriate choice of the function $\rho$, we can use  $Theorem$ $2.10$ in a wide range
of applications for studying the oscillation of $(1.1)$. Thus, by choosing $\rho(t)=\pi_{1}^{\alpha }(t)$,
 $\rho(t)=\pi_{1}(t)$  and $\rho(t)=1$, we obtain the following results, respectively.

\begin{corollary}
\ Assume $(H_{1})-(H_{4})$ and $\gamma=\alpha\beta$. Moreover, assume that $(2.3)$ and $(2.33)$ hold. If
\begin{equation}\label{2.42}
\limsup\limits_{t\rightarrow\infty}
\int_{T}^{t}\left(r_{2}^{-\frac{1}{\beta}}(u)
\left(\int_{T}^{u}q(s)\text{d}s\right)^{\frac{1}{\beta}}\pi_{1}^{\alpha}\left(\sigma(u)\right)
-\left(\frac{\alpha}{\alpha +1}\right)^{\alpha +1}\frac{1}{r_{1}^{\frac{1}{\alpha}}(u)\pi_{1}(u)}
\right)\text{d}u>1,
\end{equation}
for any $T\in [t_{0},\infty)$,  then $(1.1)$ is oscillatory.
\end{corollary}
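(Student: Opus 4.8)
The plan is to derive this Corollary as an immediate specialization of Theorem 2.10, taking the auxiliary function to be $\rho(t)=\pi_{1}^{\alpha}(t)$. Thus the whole argument reduces to two things: verifying that this $\rho$ is an admissible weight, and checking that condition $(2.35)$ collapses exactly to $(2.42)$ under this choice. No new analytic input is needed beyond Theorem 2.10, since $(H_1)$--$(H_4)$, $\gamma=\alpha\beta$, $(2.3)$ and $(2.33)$ are all assumed here as well.

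First I would confirm admissibility. By $(H_2)$ the tail integral $\pi_{1}(t)=\int_{t}^{\infty}r_{1}^{-1/\alpha}(s)\,\text{d}s$ is finite and strictly positive, and since $r_{1}\in C([t_{0},\infty),(0,\infty))$ we have $\pi_{1}\in C^{1}$ with $\pi_{1}'(t)=-r_{1}^{-1/\alpha}(t)$. Hence $\rho=\pi_{1}^{\alpha}\in C^{1}\left([t_{0},\infty),(0,\infty)\right)$, so $\rho$ qualifies as a weight in Theorem 2.10, and moreover the outer prefactor simplifies at once to $\pi_{1}^{\alpha}(t)/\rho(t)=1$. I would then record the derivative $\rho'(t)=\alpha\pi_{1}^{\alpha-1}(t)\pi_{1}'(t)=-\alpha\,\pi_{1}^{\alpha-1}(t)\,r_{1}^{-1/\alpha}(t)$, which is the only nontrivial quantity entering the two terms of the integrand in $(2.35)$.

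Next I would substitute into the integrand term by term. For the leading (gain) term, $\rho(u)\bigl(\pi_{1}(\sigma(u))/\pi_{1}(u)\bigr)^{\alpha}=\pi_{1}^{\alpha}(u)\cdot\pi_{1}^{\alpha}(\sigma(u))/\pi_{1}^{\alpha}(u)=\pi_{1}^{\alpha}(\sigma(u))$, so this term becomes $r_{2}^{-1/\beta}(u)\bigl(\int_{T}^{u}q(s)\,\text{d}s\bigr)^{1/\beta}\pi_{1}^{\alpha}(\sigma(u))$, matching the first summand of $(2.42)$. For the subtracted (loss) term I would compute the exponent bookkeeping in $\dfrac{r_{1}(u)\,(\rho'(u))^{\alpha+1}}{(\alpha+1)^{\alpha+1}\rho^{\alpha}(u)}$: inserting $\rho'=-\alpha\pi_{1}^{\alpha-1}r_{1}^{-1/\alpha}$ and $\rho^{\alpha}=\pi_{1}^{\alpha^{2}}$, the power of $r_{1}$ is $1-\tfrac{\alpha+1}{\alpha}=-\tfrac{1}{\alpha}$, the power of $\pi_{1}$ is $(\alpha-1)(\alpha+1)-\alpha^{2}=(\alpha^{2}-1)-\alpha^{2}=-1$, and the constant is $\alpha^{\alpha+1}/(\alpha+1)^{\alpha+1}=\bigl(\tfrac{\alpha}{\alpha+1}\bigr)^{\alpha+1}$. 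This yields exactly $\bigl(\tfrac{\alpha}{\alpha+1}\bigr)^{\alpha+1}\dfrac{1}{r_{1}^{1/\alpha}(u)\pi_{1}(u)}$, the second summand of $(2.42)$. Combining these with the prefactor $1$, condition $(2.35)$ becomes verbatim condition $(2.42)$, and the Corollary follows from Theorem 2.10.

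The one point requiring care, which I would flag explicitly, is the sign of $(\rho'(u))^{\alpha+1}$: although $\rho'<0$, under $(H_{1})$ we have $\alpha=p/q$ with $p,q$ odd, so $\alpha+1=(p+q)/q$ has even numerator and odd denominator, whence $(\rho')^{\alpha+1}=\alpha^{\alpha+1}\pi_{1}^{(\alpha-1)(\alpha+1)}r_{1}^{-(\alpha+1)/\alpha}>0$. This keeps the loss term positive, consistent with its form in $(2.42)$; otherwise the sign convention in the half-linear power would have to be justified separately. Beyond this observation the proof is purely the substitution and the exponent arithmetic above, so I do not anticipate a genuine obstacle.
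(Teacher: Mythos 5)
Your proposal is correct and matches the paper's own route exactly: the paper obtains this corollary from Theorem 2.10 precisely by the choice $\rho(t)=\pi_{1}^{\alpha}(t)$, and your substitution and exponent bookkeeping (prefactor $\pi_{1}^{\alpha}/\rho\equiv 1$, gain term reducing to $r_{2}^{-1/\beta}(u)\left(\int_{T}^{u}q(s)\,\text{d}s\right)^{1/\beta}\pi_{1}^{\alpha}\left(\sigma(u)\right)$, loss term reducing to $\left(\frac{\alpha}{\alpha+1}\right)^{\alpha+1}\frac{1}{r_{1}^{1/\alpha}(u)\pi_{1}(u)}$) verify this faithfully. Your additional remark on the sign of $\left(\rho'\right)^{\alpha+1}$ under $(H_{1})$ is a sound clarification of a point the paper leaves implicit, but it does not change the argument.
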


\begin{corollary}
\ Assume $(H_{1})-(H_{4})$ and $\gamma=\alpha\beta$. Moreover, assume that $(2.3)$ and $(2.33)$ hold. If
\begin{equation}\label{2.43}
\limsup\limits_{t\rightarrow\infty}\pi_{1}^{\alpha-1 }(t)
\int_{T}^{t}\left(r_{2}^{-\frac{1}{\beta}}(u)
\left(\int_{T}^{u}q(s)\text{d}s\right)^{\frac{1}{\beta}}
\frac{\pi_{1}^{\alpha}\left(\sigma(u)\right)}{\pi_{1}^{\alpha-1 }(u)}
-\frac{1}{\left(\alpha+1\right)^{\alpha+1}r_{1}^{\frac{1}{\alpha}}(u)\pi_{1}^{\alpha}(u)}
\right)\text{d}u>1,
\end{equation}
for any $T\in [t_{0},\infty)$,  then $(1.1)$ is oscillatory.
\end{corollary}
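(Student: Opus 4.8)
The plan is to obtain this statement as a direct specialization of Theorem 2.10, applied with the concrete weight $\rho(t)=\pi_1(t)$, and then to check that the general hypothesis (2.35) collapses exactly into the displayed condition (2.43). No new analytic idea is needed beyond that theorem; the work is entirely in verifying admissibility of this $\rho$ and carrying out the termwise simplification.

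First I would confirm that $\rho=\pi_1$ is an admissible choice. By $(H_2)$ the tail integral $\pi_1(t)=\int_t^\infty r_1^{-1/\alpha}(s)\,\mathrm{d}s$ is finite and strictly positive, and since $r_1\in C([t_0,\infty),(0,\infty))$ the fundamental theorem of calculus yields $\pi_1\in C^1$ with $\pi_1'(t)=-r_1^{-1/\alpha}(t)$. Hence $\rho=\pi_1\in C^1([t_0,\infty),(0,\infty))$, precisely the regularity required in Theorem 2.10.

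Next I would substitute $\rho=\pi_1$ termwise into (2.35). The outer factor becomes $\pi_1^\alpha(t)/\rho(t)=\pi_1^{\alpha-1}(t)$, which is the prefactor appearing in (2.43). In the integrand, the gain term $\rho(u)\bigl(\pi_1(\sigma(u))/\pi_1(u)\bigr)^\alpha$ simplifies to $\pi_1^\alpha(\sigma(u))/\pi_1^{\alpha-1}(u)$. For the penalty term I would use $\rho'=-r_1^{-1/\alpha}$; since $\alpha$ is a quotient of odd positive integers the numerator of $\alpha+1$ is even, so $(\rho')^{\alpha+1}=r_1^{-(\alpha+1)/\alpha}>0$, and then
$$\frac{r_1(u)\bigl(\rho'(u)\bigr)^{\alpha+1}}{(\alpha+1)^{\alpha+1}\rho^\alpha(u)}=\frac{r_1(u)\,r_1^{-(\alpha+1)/\alpha}(u)}{(\alpha+1)^{\alpha+1}\pi_1^\alpha(u)}=\frac{1}{(\alpha+1)^{\alpha+1}r_1^{1/\alpha}(u)\pi_1^\alpha(u)},$$
which is exactly the subtracted term in (2.43). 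Thus condition (2.35) evaluated at $\rho=\pi_1$ is literally (2.43).

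Finally, since $\gamma=\alpha\beta$ by assumption, and (2.3) and (2.33) are assumed while (2.43) furnishes (2.35) for this particular $\rho$, all hypotheses of Theorem 2.10 are met, so (1.1) is oscillatory. The only point that requires care is the exponent and sign bookkeeping in the penalty term, namely that $(\rho')^{\alpha+1}$ is positive (a consequence of the odd-integer-quotient structure of $\alpha$) and that $r_1\cdot r_1^{-(\alpha+1)/\alpha}=r_1^{-1/\alpha}$; apart from this routine algebra there is no genuine obstacle, since the real content already resides in Theorem 2.10.
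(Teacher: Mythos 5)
Your proof is correct and is exactly the paper's intended argument: the paper derives this corollary from Theorem 2.10 by the very choice $\rho(t)=\pi_{1}(t)$, and your termwise verification (including the sign of $(\rho')^{\alpha+1}$ via the odd-quotient structure of $\alpha$ and the simplification $r_{1}\cdot r_{1}^{-(\alpha+1)/\alpha}=r_{1}^{-1/\alpha}$) supplies the routine computation the paper leaves implicit.
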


\begin{corollary}
\ Assume $(H_{1})-(H_{4})$ and $\gamma=\alpha\beta$. Moreover, assume that $(2.3)$ and $(2.33)$ hold. If
\begin{equation}\label{2.44}
\limsup\limits_{t\rightarrow\infty}\pi_{1}^{\alpha}(t)
\int_{T}^{t}r_{2}^{-\frac{1}{\beta}}(u)
\left(\int_{T}^{u}q(s)\text{d}s\right)^{\frac{1}{\beta}}
\left(\frac{\pi_{1}\left(\sigma(u)\right)}{\pi_{1}(u)}
\right)^{\alpha}\text{d}u>1,
\end{equation}
for any $T\in [t_{0},\infty)$,  then $(1.1)$ is oscillatory.
\end{corollary}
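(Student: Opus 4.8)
The plan is to obtain this corollary as the special case $\rho(t)\equiv 1$ of Theorem 2.10, exactly as signalled in the sentence preceding the statement. First I would check that the constant function $\rho(t)=1$ is an admissible weight: it clearly lies in $C^{1}\left([t_{0},\infty),(0,\infty)\right)$, so all the structural hypotheses required by Theorem 2.10 are in force once we assume $(H_{1})-(H_{4})$, the balance condition $\gamma=\alpha\beta$, and that $(2.3)$ and $(2.33)$ hold. Thus nothing beyond Theorem 2.10 itself needs to be established; the task is purely to match the integral conditions.

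Next I would substitute $\rho\equiv 1$ into the key condition $(2.35)$ and simplify. Since $\rho'(t)=0$, the penalty term $r_{1}(u)\left(\rho'(u)\right)^{\alpha+1}/\left((\alpha+1)^{\alpha+1}\rho^{\alpha}(u)\right)$ vanishes identically, while the outer prefactor $\pi_{1}^{\alpha}(t)/\rho(t)$ collapses to $\pi_{1}^{\alpha}(t)$. After these reductions the bracketed expression in $(2.35)$ becomes exactly the integrand of $(2.44)$, so $(2.35)$ reduces verbatim to $(2.44)$. In other words, assuming $(2.44)$ for every $T\in[t_{0},\infty)$ is precisely assuming $(2.35)$ for this particular choice of $\rho$.

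Having identified the hypotheses, I would then simply invoke Theorem 2.10 to conclude that $(1.1)$ is oscillatory. There is essentially no obstacle here: the genuine analytic work — the generalized Riccati substitution $(2.36)$, the differentiation identity $(2.37)$, the estimate $(2.38)$ derived from $(2.16)$, and the maximization via Lemma 2.4 — has already been carried out inside the proof of Theorem 2.10. The one point worth stating explicitly is that with $\rho$ constant the Riccati term simplifies without any further estimate, so the vanishing of the $\rho'$-term is immediate rather than asymptotic. Hence the corollary follows at once, and no separate argument for the classes $S_{2}$, $S_{3}$, $S_{4}$ is needed, as these are handled uniformly within Theorem 2.10.
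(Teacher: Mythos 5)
Your proposal is correct and is exactly the paper's intended argument: the corollary is stated as the specialization $\rho(t)\equiv 1$ of Theorem 2.10, under which the term involving $\rho'$ vanishes, the prefactor $\pi_{1}^{\alpha}(t)/\rho(t)$ becomes $\pi_{1}^{\alpha}(t)$, and condition $(2.35)$ reduces verbatim to $(2.44)$. Nothing further is required, since the treatment of all four solution classes is already contained in the proof of Theorem 2.10.
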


\begin{remark}
\  The conclusions of $Theorem$ $2.10$ and $Corollary$ $2.2-2.4$ remain
valid if the condition $(2.3)$ is replaced by $(2.1)$.
\end{remark}

\begin{lemma}\label{2.5}
\ Assume $(H_{1})-(H_{4})$ and $\gamma=\alpha\beta$. Furthermore, assume that $(2.1)$ holds.
Suppose that $(1.1)$ has a positive solution $y\in S_{1}$ on
$\big[t_{1}, \infty\big)\subseteqq\big[t_{0}, \infty\big)$ and that $\lambda$ and $\mu$ are
constants satisfying
\begin{equation}\label{2.45}
0\leq \lambda+\mu<\alpha,
\end{equation}
\begin{equation}\label{2.46}
0\leq\lambda\leq r_{2}^{-\frac{1}{\beta}}(t)
\left(\int_{t_{1}}^{t}q(s)\text{d}s\right)^{\frac{1}{\beta}}
\pi_{1}^{\alpha}\left(\sigma(t)\right)\pi_{1}(t)r_{1}^{\frac{1}{\alpha}}(t)
\end{equation}
and
\begin{equation}\label{2.47}
0\leq\mu\leq \alpha \left(\int_{t_{1}}^{t}r_{2}^{-\frac{1}{\beta}}(u)
\left(\int_{t_{1}}^{u}q(s)\text{d}s\right)^{\frac{1}{\beta}}\text{d}u\right)^{\frac{1}{\alpha}}
\pi_{1}\left(\sigma(t)\right).
\end{equation}
Then there exists a $t_{*}\in\big[t_{1}, \infty\big)$ such that
\begin{equation}\label{2.48}
\frac{y}{\pi_{1}^{1-\frac{\lambda}{\alpha}}}\uparrow
\end{equation}
and
\begin{equation}\label{2.49}
\frac{y}{\pi_{1}^{\frac{\mu}{\alpha}}}\downarrow
\end{equation}
on $\big[t_{*}, \infty\big)$.
\end{lemma}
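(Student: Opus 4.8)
The plan is to reduce both assertions to pointwise bounds on the single quantity $\phi:=(-L_{1}y)^{\frac{1}{\alpha}}\pi_{1}/y$ and then to sharpen the trivial bound $\phi\le 1$. Throughout I set $m:=(-L_{1}y)^{\frac{1}{\alpha}}>0$, so that $-y'=m\,r_{1}^{-\frac{1}{\alpha}}$, and I put $g:=y/\pi_{1}$. Since $(2.1)$ holds, Theorem 2.1 gives $y(\infty)=0$, whence $y(t)=\int_{t}^{\infty}m(s)r_{1}^{-\frac{1}{\alpha}}(s)\,\text{d}s$; by $(2.17)$–$(2.18)$ the function $g$ is nondecreasing, and $(2.17)$ reads $m\pi_{1}\le y$, i.e. $-L_{1}y\le g^{\alpha}$ and $\phi\le 1$. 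A one–line differentiation using $\pi_{1}'=-r_{1}^{-\frac{1}{\alpha}}$ shows that the sign of $(y/\pi_{1}^{p})'$ equals that of $py-m\pi_{1}$; hence $(2.49)$ is equivalent to $\phi\ge\mu/\alpha$ and $(2.48)$ is equivalent to $\phi\le 1-\lambda/\alpha$. Observe that $(2.45)$ yields $0\le\mu/\alpha\le 1-\lambda/\alpha\le 1$, so the two target bounds are consistent.

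For $(2.49)$ I would repeat the computation of Theorem 2.3 to reach $(2.26)$, which under $\gamma=\alpha\beta$ states $-L_{1}y\ge y^{\alpha}(\sigma)\int_{t_{1}}^{t}r_{2}^{-\frac{1}{\beta}}(u)\big(\int_{t_{1}}^{u}q\big)^{\frac{1}{\beta}}\text{d}u$. Because $g\uparrow$ and $\sigma(t)\ge t$ force $y(\sigma(t))\ge g(t)\pi_{1}(\sigma(t))$, I may replace $y^{\alpha}(\sigma)$ by $g^{\alpha}\pi_{1}^{\alpha}(\sigma)$; extracting $\alpha$-th roots and dividing by $g$ gives $\phi\ge\pi_{1}(\sigma(t))\big(\int_{t_{1}}^{t}r_{2}^{-\frac{1}{\beta}}(\int_{t_{1}}^{u}q)^{\frac{1}{\beta}}\text{d}u\big)^{\frac{1}{\alpha}}$, which is $\ge\mu/\alpha$ by $(2.47)$. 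This settles $(2.49)$.

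The substantive part is $(2.48)$. Arguing as in Theorem 2.3 I first obtain $(2.25)$, and the same substitution $y(\sigma(t))\ge g(t)\pi_{1}(\sigma(t))$ (together with $\gamma=\alpha\beta$) turns it into $-(L_{1}y)'\ge g^{\alpha}\pi_{1}^{\alpha}(\sigma)r_{2}^{-\frac{1}{\beta}}(\int_{t_{1}}^{t}q)^{\frac{1}{\beta}}$. The key manoeuvre is to estimate the logarithmic derivative $m'/m=\dfrac{-(L_{1}y)'}{\alpha(-L_{1}y)}$: bounding the numerator below by the last inequality and the denominator above by $-L_{1}y\le g^{\alpha}$, the factor $g^{\alpha}$ cancels and I obtain the clean pointwise estimate $m'/m\ge\frac{1}{\alpha}\pi_{1}^{\alpha}(\sigma)r_{2}^{-\frac{1}{\beta}}(\int_{t_{1}}^{t}q)^{\frac{1}{\beta}}$, which by $(2.46)$ is at least $\frac{\lambda}{\alpha}r_{1}^{-\frac{1}{\alpha}}/\pi_{1}=-\frac{\lambda}{\alpha}(\ln\pi_{1})'$. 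Consequently $m\pi_{1}^{\lambda/\alpha}$ is nondecreasing, so $m(s)\ge m(t)\big(\pi_{1}(t)/\pi_{1}(s)\big)^{\lambda/\alpha}$ for $s\ge t$. Inserting this into $y(t)=\int_{t}^{\infty}m(s)r_{1}^{-\frac{1}{\alpha}}(s)\,\text{d}s$ and evaluating $\int_{t}^{\infty}\pi_{1}^{-\lambda/\alpha}(-\pi_{1}')\,\text{d}s=\pi_{1}^{1-\lambda/\alpha}(t)/(1-\lambda/\alpha)$ (finite since $\lambda<\alpha$ and $\pi_{1}(\infty)=0$) gives $y(t)\ge m(t)\pi_{1}(t)/(1-\lambda/\alpha)$, that is $\phi\le 1-\lambda/\alpha$, which is $(2.48)$.

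I expect the last step to be the only genuine obstacle. Condition $(2.46)$ controls the \emph{derivative} $-(L_{1}y)'$, whereas $(2.48)$ concerns the \emph{size} of $m$, and the direct route through an integral estimate of $m(s)-m(t)$ forces one to handle $m^{\alpha-1}$, whose monotonicity depends on whether $\alpha\gtrless 1$. Passing instead through $m'/m$ and cancelling $g^{\alpha}$ by means of $(2.17)$ is precisely what removes this case distinction and makes the argument uniform in $\alpha$. The only further point requiring care is the justification of $y(\infty)=0$ (needed so that $y=\int_{t}^{\infty}m\,r_{1}^{-\frac{1}{\alpha}}$ and $\pi_{1}^{1-\lambda/\alpha}(\infty)=0$), which is where the standing hypothesis $(2.1)$, via Theorem 2.1, enters.
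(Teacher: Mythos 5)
Your proposal is correct and follows essentially the same route as the paper: your monotonicity of $m\pi_{1}^{\lambda/\alpha}$ is exactly the paper's claim that $-(L_{1}y)\pi_{1}^{\lambda}$ is nondecreasing (obtained there by the product rule from $(2.25)$, $(2.17)$ and $(2.46)$), your tail-integral evaluation is the paper's $(2.51)$--$(2.52)$, and your treatment of $(2.49)$ via $(2.26)$, the monotonicity of $y/\pi_{1}$, and $(2.47)$ matches the paper's second part. The reformulation through $\phi=(-L_{1}y)^{1/\alpha}\pi_{1}/y$ and the logarithmic derivative is a cosmetic repackaging, not a different argument (and note the paper only needs the inequality $y(t)\geq\int_{t}^{\infty}m\,r_{1}^{-1/\alpha}\,\mathrm{d}s$, which follows from positivity of $y$ alone, so the appeal to Theorem 2.1 for $y(\infty)=0$ is dispensable).
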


\begin{proof}
\ Assume $y\in S_{1}$. As the proof in $Theorem$ $2.3$, we arrive at $(2.25)$.
From $(1.1)$, $(2.17)$ and $(2.37)$, we see that
$$
\aligned
\left(-\left(L_{1}y\right)\cdot\pi_{1}^{\lambda}\right)'(t)
&=-\left(L_{1}y\right)'(t)\pi_{1}^{\lambda}(t)
+L_{1}y(t)\lambda\pi_{1}^{\lambda-1}(t)r_{1}^{-\frac{1}{\alpha}}(t)\\
&\geq r_{2}^{-\frac{1}{\beta}}(t)
\left(\int_{t_{1}}^{t}q(s)\text{d}s\right)^{\frac{1}{\beta}}
y^{\frac{\gamma}{\beta}}\left(\sigma(t)\right)\pi_{1}^{\lambda}(t)
+\lambda L_{1}y(t)\pi_{1}^{\lambda-1}(t)r_{1}^{-\frac{1}{\alpha}}(t)\\
&=r_{2}^{-\frac{1}{\beta}}(t)
\left(\int_{t_{1}}^{t}q(s)\text{d}s\right)^{\frac{1}{\beta}}
y^{\alpha}\left(\sigma(t)\right)\pi_{1}^{\lambda}(t)
+\lambda L_{1}y(t)\pi_{1}^{\lambda-1}(t)r_{1}^{-\frac{1}{\alpha}}(t)\\
&\geq -r_{2}^{-\frac{1}{\beta}}(t)
\left(\int_{t_{1}}^{t}q(s)\text{d}s\right)^{\frac{1}{\beta}}
L_{1}y\left(\sigma(t)\right)\pi_{1}^{\alpha}\left(\sigma(t)\right)\pi_{1}^{\lambda}(t)\\
&\ \ \ \ +\lambda L_{1}y(t)\pi_{1}^{\lambda-1}(t)r_{1}^{-\frac{1}{\alpha}}(t)\\
&\geq -r_{2}^{-\frac{1}{\beta}}(t)
\left(\int_{t_{1}}^{t}q(s)\text{d}s\right)^{\frac{1}{\beta}}
L_{1}y(t)\pi_{1}^{\alpha}\left(\sigma(t)\right)\pi_{1}^{\lambda}(t)\\
&\ \ \ \ +\lambda L_{1}y(t)\pi_{1}^{\lambda-1}(t)r_{1}^{-\frac{1}{\alpha}}(t)\\
&=-L_{1}y(t)\pi_{1}^{\lambda}(t)
\left(r_{2}^{-\frac{1}{\beta}}(t)
\left(\int_{t_{1}}^{t}q(s)\text{d}s\right)^{\frac{1}{\beta}}\pi_{1}^{\alpha}\left(\sigma(t)\right)
-\frac{\lambda}{r_{1}^{\frac{1}{\alpha}}(t)\pi_{1}(t)}\right)\\
&\geq 0.
\endaligned
$$
Thus, $-\left(L_{1}y\right)\pi_{1}^{\lambda}$ is nondecreasing eventually, say for $y\geq t_{2}$,
where $t_{2}\in \big[t_{1}, \infty\big)$ is large enough. Furthermore, using this property, we
get
\begin{equation}\label{2.50}
\aligned
y(t)
&\geq -\int^{\infty}_{t}r_{1}^{-\frac{1}{\alpha}}(s)\left(L_{1}y\right)^{\frac{1}{\alpha}}(s)\text{d}s\\
&=-\int^{\infty}_{t}r_{1}^{-\frac{1}{\alpha}}(s)
\frac{\pi_{1}^{\frac{\lambda}{\alpha}}(s)}{\pi_{1}^{\frac{\lambda}{\alpha}}(s)}
\left(L_{1}y\right)^{\frac{1}{\alpha}}(s)\text{d}s\\
&\geq -\left(\left(L_{1}y\right)\cdot\pi_{1}^{\lambda}\right)^{\frac{1}{\alpha}}(t)
\int_{t}^{\infty}\frac{1}{r_{1}^{\frac{1}{\alpha}}(s)\pi_{1}^{\frac{\lambda}{\alpha}}(s)}\text{d}s.
\endaligned
\end{equation}
It is easy to verify that
\begin{equation}\label{2.51}
\int_{t}^{\infty}\frac{1}{r_{1}^{\frac{1}{\alpha}}(s)\pi_{1}^{\frac{\lambda}{\alpha}}(s)}\text{d}s
=\frac{\pi_{1}^{1-\frac{\lambda}{\alpha}}(t)}{1-\frac{\lambda}{\alpha}},
\end{equation}
and thus, we get
\begin{equation}\label{2.52}
y(t)\geq -\frac{\left(L_{1}y\right)^{\frac{1}{\alpha}}(t)\cdot\pi_{1}(t)}{1-\frac{\lambda}{\alpha}}
=-\frac{r_{1}^{\frac{1}{\alpha}}(t)y'(t)\pi_{1}(t)}{1-\frac{\lambda}{\alpha}}.
\end{equation}
Therefore,
$$\left(\frac{y}{\pi_{1}^{1-\frac{\lambda}{\alpha}}}\right)'(t)
=\frac{r_{1}^{\frac{1}{\alpha}}(t)y'(t)\pi_{1}(t)+\left(1-\frac{\lambda}{\alpha}\right)y(t)}
{r_{1}^{\frac{1}{\alpha}}(t)\pi_{1}^{2-\frac{\lambda}{\alpha}}(t)}
\geq 0,$$
and thus, $y/\pi_{1}^{1-\frac{\lambda}{\alpha}}$ is nondecreasing.

Next, we will prove the last monotonicity. As the proof in $Theorem$ $2.3$,
we arrive at $(2.26)$, that is
$$-r_{1}(t)\left(y'(t)\right)^{\alpha}
\geq y^{\alpha}\left(\sigma(t)\right)\int_{t_{1}}^{t}r_{2}^{-\frac{1}{\beta}}(u)
\left(\int_{t_{1}}^{u}q(s)\text{d}s\right)^{\frac{1}{\beta}}\text{d}u.$$
Using $(2.16)$ with the above inequality, we have
$$-r_{1}(t)\left(y'(t)\right)^{\alpha}
\geq \frac{\pi_{1}^{\alpha}\left(\sigma(t)\right)}{\pi_{1}^{\alpha}(t)}
y^{\alpha}(t)\int_{t_{2}}^{t}r_{2}^{-\frac{1}{\beta}}(u)
\left(\int_{t_{2}}^{u}q(s)\text{d}s\right)^{\frac{1}{\beta}}\text{d}u,$$
i.e.,
$$y(t)\leq -r_{1}^{\frac{1}{\alpha}}(t)y'(t)
\frac{\pi_{1}(t)}{\pi_{1}\left(\sigma(t)\right)}
\left(\int_{t_{2}}^{t}r_{2}^{-\frac{1}{\beta}}(u)
\left(\int_{t_{2}}^{u}q(s)\text{d}s\right)^{\frac{1}{\beta}}\text{d}u\right)^{-\frac{1}{\alpha}}$$
for $t\geq t_{2}$, where $t_{2}\geq t_{1}$.
Using the above relation in the equality
$$\left(\frac{y}{\pi_{1}^{\frac{\mu}{\alpha}}}\right)'(t)
=\frac{y'(t)}{\pi_{1}^{\frac{\mu}{\alpha}}(t)}
+\frac{\frac{\mu}{\alpha}y(t)}{\pi_{1}^{\frac{\mu}{\alpha}+1}(t)r_{1}^{\frac{1}{\alpha}}(t)},$$
and taking the condition $(2.47)$ into account, we get
$$
\aligned
\left(\frac{y}{\pi_{1}^{\frac{\mu}{\alpha}}}\right)'(t)
&\leq \frac{y'(t)}{\pi_{1}^{\frac{\mu}{\alpha}}(t)}
-\frac{\frac{\mu}{\alpha}y'(t)}{\pi_{1}^{\frac{\mu}{\alpha}(t)}\pi_{1}\left(\sigma(t)\right)}
\left(\int_{t_{2}}^{t}r_{2}^{-\frac{1}{\beta}}(u)
\left(\int_{t_{2}}^{u}q(s)\text{d}s\right)^{\frac{1}{\beta}}\text{d}u\right)^{-\frac{1}{\alpha}}\\
&=\frac{y'(t)}{\pi_{1}^{\frac{\mu}{\alpha}}(t)}
\left(1-\frac{\mu}{\alpha\pi_{1}\left(\sigma(t)\right)}
\left(\int_{t_{2}}^{t}r_{2}^{-\frac{1}{\beta}}(u)
\left(\int_{t_{2}}^{u}q(s)\text{d}s\right)^{\frac{1}{\beta}}\text{d}u\right)^{-\frac{1}{\alpha}}\right)\\
&\leq \frac{y'(t)}{\pi_{1}^{\frac{\mu}{\alpha}}(t)}
\left(1-\frac{\mu}{\alpha\pi_{1}\left(\sigma(t)\right)}
\left(\int_{t_{1}}^{t}r_{2}^{-\frac{1}{\beta}}(u)
\left(\int_{t_{1}}^{u}q(s)\text{d}s\right)^{\frac{1}{\beta}}\text{d}u\right)^{-\frac{1}{\alpha}}\right)\\
&\leq 0.
\endaligned
$$
Thus, $y/\pi_{1}^{\frac{\mu}{\alpha}}$ is nonincreasing.
 The proof is complete.
\end{proof}

\begin{theorem}\label{thm2.11}
\ Assume $(H_{1})-(H_{4})$ and $\gamma=\alpha\beta$. Furthermore, suppose that  $(2.33)$ holds
and $\lambda$ and $\mu$ are constants satisfying $(2.45)-(2.47)$. If
\begin{equation}\label{2.53}
\limsup\limits_{t\rightarrow\infty}\pi_{1}^{\lambda}(t)
\pi_{1}^{\alpha-\lambda-\mu}\left(\sigma(t)\right)
\int_{t_{1}}^{t}\pi_{1}^{\mu}\left(\sigma(u)\right)r_{2}^{-\frac{1}{\beta}}(u)
\left(\int_{t_{1}}^{u}q(s)\text{d}s\right)^{\frac{1}{\beta}}\text{d}u
>\left(1-\frac{\lambda}{\alpha}\right)^{\alpha},
\end{equation}
for any $t_{1}\geq t_{0}$, then $(1.1)$ is oscillatory.
\end{theorem}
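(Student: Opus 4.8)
The plan is to argue by contradiction within the four-class scheme of Lemma 2.1: if $y$ were an eventually positive solution, then $y$ would lie eventually in one of $S_{1},S_{2},S_{3},S_{4}$, and I would rule out each class. Three of them are disposed of by the earlier machinery. First I would note that $(2.53)$ forces the auxiliary divergences: since $\pi_{1}$ decreases to $0$ and $(2.45)$ gives $\alpha-\lambda-\mu>0$, the prefactor $\pi_{1}^{\lambda}(t)\pi_{1}^{\alpha-\lambda-\mu}(\sigma(t))$ tends to $0$, so the integral in $(2.53)$ must be unbounded; as $\pi_{1}^{\mu}(\sigma(u))$ is bounded, this forces $\int_{t_{0}}^{t}r_{2}^{-1/\beta}(u)\big(\int_{t_{0}}^{u}q\big)^{1/\beta}\text{d}u$ to be unbounded, i.e.\ $(2.2)$, and then $\pi_{2}<\infty$ yields $(2.3)$. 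The $S_{3}$ and $S_{4}$ arguments of Theorem 2.1, which use only $(2.3)$ and $(2.2)$ respectively, then give $S_{3}=S_{4}=\emptyset$. For $S_{2}$ I would copy the $S_{2}$ argument of Theorem 2.5 (integrate $(1.1)$ twice between $t$ and $u$, put $u=\sigma(t)$, use $\gamma=\alpha\beta$) to reach the first-order advanced inequality $(2.32)$, whose oscillation is guaranteed because $(2.33)$ implies $(2.29)$ by Remark 2.2; hence $S_{2}=\emptyset$.

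The substance of the proof is the class $S_{1}$, where the idea is to channel the two-sided monotonicity of Lemma 2.5 into a single inequality whose left side is exactly that of $(2.53)$. Invoking Lemma 2.5 (so that $(2.48)$ and $(2.49)$ hold for $t\geq t_{*}$) and choosing $t_{1}=t_{*}$, which is legitimate since $(2.53)$ is assumed for every $t_{1}\geq t_{0}$, I would start from the $S_{1}$ estimate $(2.25)$, which with $\gamma/\beta=\alpha$ reads $-(L_{1}y)'(u)\geq y^{\alpha}(\sigma(u))r_{2}^{-1/\beta}(u)\big(\int_{t_{1}}^{u}q\big)^{1/\beta}$. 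The first step is to insert the weight $\pi_{1}^{\mu}(\sigma(u))$: writing $y^{\alpha}(\sigma(u))=\big(y(\sigma(u))/\pi_{1}^{\mu/\alpha}(\sigma(u))\big)^{\alpha}\pi_{1}^{\mu}(\sigma(u))$ and using that $y/\pi_{1}^{\mu/\alpha}$ is nonincreasing to lower the argument $\sigma(u)$ to $\sigma(t)$, integration from $t_{1}$ to $t$ yields
\[
-L_{1}y(t)\ \geq\ \frac{y^{\alpha}(\sigma(t))}{\pi_{1}^{\mu}(\sigma(t))}\int_{t_{1}}^{t}\pi_{1}^{\mu}(\sigma(u))\,r_{2}^{-\frac{1}{\beta}}(u)\left(\int_{t_{1}}^{u}q(s)\,\text{d}s\right)^{\frac{1}{\beta}}\text{d}u .
\]

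The second step uses the other monotonicity. Inequality $(2.52)$ of Lemma 2.5 rearranges into the upper bound $-L_{1}y(t)\leq(1-\lambda/\alpha)^{\alpha}\,y^{\alpha}(t)/\pi_{1}^{\alpha}(t)$; combining it with the display above cancels $-L_{1}y(t)$ and leaves the factor $y^{\alpha}(\sigma(t))/y^{\alpha}(t)$, which I would bound below by $\pi_{1}^{\alpha-\lambda}(\sigma(t))/\pi_{1}^{\alpha-\lambda}(t)$ using that $y/\pi_{1}^{1-\lambda/\alpha}$ is nondecreasing together with $\sigma(t)\geq t$. Collecting the powers of $\pi_{1}(t)$ and $\pi_{1}(\sigma(t))$ — which assemble into $\pi_{1}^{\lambda}(t)$ and $\pi_{1}^{\alpha-\lambda-\mu}(\sigma(t))$ — I obtain, for all large $t$,
\[
\pi_{1}^{\lambda}(t)\,\pi_{1}^{\alpha-\lambda-\mu}(\sigma(t))\int_{t_{1}}^{t}\pi_{1}^{\mu}(\sigma(u))\,r_{2}^{-\frac{1}{\beta}}(u)\left(\int_{t_{1}}^{u}q(s)\,\text{d}s\right)^{\frac{1}{\beta}}\text{d}u\ \leq\ \left(1-\frac{\lambda}{\alpha}\right)^{\alpha},
\]
and passing to $\limsup_{t\to\infty}$ contradicts $(2.53)$, so $S_{1}=\emptyset$ and the proof is complete.

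The main obstacle is precisely this $S_{1}$ computation: it succeeds only because $\gamma=\alpha\beta$ converts $y^{\gamma/\beta}$ into $y^{\alpha}$, and because the two monotonicity exponents $\mu/\alpha$ and $1-\lambda/\alpha$ are tuned so that every $y$-factor cancels and the surviving powers of $\pi_{1}$ land exactly on $\lambda$ at $t$ and $\alpha-\lambda-\mu$ at $\sigma(t)$. I would also verify at the outset that $(2.45)$–$(2.47)$ are exactly what Lemma 2.5 needs for both $(2.48)$ and $(2.49)$ to hold on one common half-line, and that the standing hypothesis $(2.1)$ of that lemma is available under the present assumptions; this consistency check is the only point demanding care beyond routine estimation.
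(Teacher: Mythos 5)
Your proposal is correct and takes essentially the same route as the paper: the same preliminary divergence observations dispose of $S_{3}$, $S_{4}$ (via Theorem 2.1) and of $S_{2}$ (via the Theorem 2.5 argument with Remark 2.2), and your $S_{1}$ computation is just a reordering of the paper's chain $(2.56)$--$(2.57)$, combining the two monotonicities of Lemma 2.5 with $(2.52)$ so that $-L_{1}y(t)$ cancels and one reaches exactly the inequality whose $\limsup$ contradicts $(2.53)$. Your closing consistency check is in fact more careful than the paper, which invokes Lemma 2.5 without ever addressing its standing hypothesis $(2.1)$; this is harmless only because that hypothesis is never actually used in the lemma's proof.
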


\begin{proof}
\ Suppose the contrary and assume  that $y$ is a nonoscillatory solution of $(1.1)$ on $\big[t_{0},\infty\big)$.
Without loss of generality, we may assume  that $y(t)>0$ and $ y\left(\sigma(t)\right)>0$
for $t\in \big[t_{1},\infty\big)\subseteqq\big[t_{0},\infty\big)$. Then we know that $y$ eventually belongs to
one  of the four classes in $Lemma$ $ 2.1$.
We will consider each of them separately.

Before proceeding further, note that $(2.11)$ and
\begin{equation}\label{2.54}
\int_{t_{0}}^{\infty}q(s)\pi_{1}^{\gamma}\left(\sigma(s)\right)\text{d}s=\infty
\end{equation}
are necessary for $(2.19)$ to be valid. To verify this, it suffices to see that
$(H_{2})$ implies
\begin{equation}\label{2.55}
\pi_{1}^{\frac{\lambda}{\alpha}}(t)\pi_{1}^{1-\frac{\lambda}{\alpha}-\frac{\mu}{\alpha}}\left(\sigma(t)\right)
\leq \pi_{1}^{1-\frac{\lambda}{\alpha}}(t)\rightarrow 0,\ \ \ (t\rightarrow\infty).
\end{equation}
From the above inequality, we conclude that the function $\int_{t_{0}}^{t}\pi_{1}^{\mu}\left(\sigma(u)\right)r_{2}^{-\frac{1}{\beta}}(u)
\left(\int_{t_{1}}^{u}q(s)\text{d}s\right)^{\frac{1}{\beta}}\text{d}u$
and consequently $\int_{t_{1}}^{t}r_{2}^{-\frac{1}{\beta}}(u)
\left(\int_{t_{1}}^{u}q(s)\text{d}s\right)^{\frac{1}{\beta}}\text{d}u$
must be unbounded.

Assume $y\in S_{1}$. As the proof in $Theorem$ $2.3$, we arrive at $(2.26)$, that is
\begin{equation}\label{2.56}
\aligned
-r_{1}(t)\left(y'(t)\right)^{\alpha}
&\geq -r_{1}(t_{1})\left(y'(t_{1})\right)^{\alpha}
+\int_{t_{1}}^{t}y^{\frac{\gamma}{\beta}}\left(\sigma(u)\right)r_{2}^{-\frac{1}{\beta}}(u)
\left(\int_{t_{1}}^{u}q(s)\text{d}s\right)^{\frac{1}{\beta}}\text{d}u\\
&\geq \int_{t_{1}}^{t}y^{\alpha}\left(\sigma(u)\right)r_{2}^{-\frac{1}{\beta}}(u)
\left(\int_{t_{1}}^{u}q(s)\text{d}s\right)^{\frac{1}{\beta}}\text{d}u.
\endaligned
\end{equation}
Using the conclusions of $Lemma$ $2.5$ that $y/\pi_{1}^{\frac{\mu}{\alpha}}$ is nonincreasing
and $y/\pi_{1}^{1-\frac{\lambda}{\alpha}}$ is nondecreasing, we obtain
\begin{equation}\label{2.57}
\aligned
-r_{1}(t)\left(y'(t)\right)^{\alpha}
&\geq \int_{t_{1}}^{t}\frac{y^{\alpha}\left(\sigma(u)\right)}{\pi_{1}^{\mu}\left(\sigma(u)\right)}
\pi_{1}^{\mu}\left(\sigma(u)\right)r_{2}^{-\frac{1}{\beta}}(u)
\left(\int_{t_{1}}^{u}q(s)\text{d}s\right)^{\frac{1}{\beta}}\text{d}u\\
&\geq \left(\frac{y\left(\sigma(t)\right)}{\pi_{1}^{\frac{\mu}{\alpha}}\left(\sigma(t)\right)}\right)^{\alpha}
\int_{t_{1}}^{t}\pi_{1}^{\mu}\left(\sigma(u)\right)r_{2}^{-\frac{1}{\beta}}(u)
\left(\int_{t_{1}}^{u}q(s)\text{d}s\right)^{\frac{1}{\beta}}\text{d}u\\
&= \left(\frac{y\left(\sigma(t)\right)\pi_{1}^{1-\frac{\lambda}{\alpha}}\left(\sigma(t)\right)}
{\pi_{1}^{\frac{\mu}{\alpha}}\left(\sigma(t)\right)\pi_{1}^{1-\frac{\lambda}{\alpha}}\left(\sigma(t)\right)}\right)^{\alpha}
\int_{t_{1}}^{t}\pi_{1}^{\mu}\left(\sigma(u)\right)r_{2}^{-\frac{1}{\beta}}(u)
\left(\int_{t_{1}}^{u}q(s)\text{d}s\right)^{\frac{1}{\beta}}\text{d}u\\
&\geq \left(\frac{y(t)\pi_{1}^{1-\frac{\lambda}{\alpha}-\frac{\mu}{\alpha}}\left(\sigma(t)\right)}
{\pi_{1}^{1-\frac{\lambda}{\alpha}}(t)}\right)^{\alpha}
\int_{t_{1}}^{t}\pi_{1}^{\mu}\left(\sigma(u)\right)r_{2}^{-\frac{1}{\beta}}(u)
\left(\int_{t_{1}}^{u}q(s)\text{d}s\right)^{\frac{1}{\beta}}\text{d}u.
\endaligned
\end{equation}
Using $(2.52)$ in the above inequality, we have
$$
\aligned
&-r_{1}(t)\left(y'(t)\right)^{\alpha}\\
&\geq -r_{1}(t)\left(y'(t)\right)^{\alpha}
\left(\frac{\pi_{1}^{\frac{\lambda}{\alpha}}(t)\pi_{1}^{1-\frac{\lambda}{\alpha}
-\frac{\mu}{\alpha}}\left(\sigma(t)\right)}
{1-\frac{\lambda}{\alpha}}\right)^{\alpha}
\int_{t_{1}}^{t}\pi_{1}^{\mu}\left(\sigma(u)\right)r_{2}^{-\frac{1}{\beta}}(u)
\left(\int_{t_{1}}^{u}q(s)\text{d}s\right)^{\frac{1}{\beta}}\text{d}u,
\endaligned
$$
that is,
$$1\geq \left(\frac{\pi_{1}^{\frac{\lambda}{\alpha}}(t)\pi_{1}^{1-\frac{\lambda}{\alpha}
-\frac{\mu}{\alpha}}\left(\sigma(t)\right)}
{1-\frac{\lambda}{\alpha}}\right)^{\alpha}
\int_{t_{1}}^{t}\pi_{1}^{\mu}\left(\sigma(u)\right)r_{2}^{-\frac{1}{\beta}}(u)
\left(\int_{t_{1}}^{u}q(s)\text{d}s\right)^{\frac{1}{\beta}}\text{d}u.$$
Taking the $limsup$ on both sides of the above inequality, we reach a contradiction
with $(2.53)$. Thus, $S_{1}={\O}$.

Accounting to $Remark$ $2.2$ with $(2.33)$, we have $S_{2}={\O}$. Also,
using $Theorem$ $2.1$, we arrive at $S_{3}=S_{4}={\O}$.
The proof is complete.
\end{proof}

\begin{theorem}\label{thm2.12}
\ Assume $(H_{1})-(H_{4})$ and $\gamma=\alpha\beta$. Furthermore, suppose that $(2.3)$ and $(2.33)$ hold,
and $\lambda\in [0,\alpha)$ is a  constant satisfying $(2.46)$. If there
exist a function  $\rho\in C^{1}\left([t_{0},\infty),(0,\infty)\right)$ and $T\in [t_{0},\infty)$ such that
\begin{equation}\label{2.58}
\aligned
\limsup\limits_{t\rightarrow\infty}
&\left\{\frac{\pi_{1}^{\alpha}(t)}{\rho (t)}
\int_{T}^{t}\left(\rho(u)r_{2}^{-\frac{1}{\beta}}(u)
\left(\int_{T}^{u}q(s)\text{d}s\right)^{\frac{1}{\beta}}\left(\frac{\pi_{1}\left(\sigma(u)\right)}{\pi_{1}(u)}\right)
^{\alpha-\lambda}-\frac{r_{1}(u)\left(\rho'(u)\right)^{\alpha +1}}{(\alpha +1)^{\alpha +1}\rho^{\alpha}(u)}
\right)\text{d}u\right\}\\
&>1,
\endaligned
\end{equation}
then $(1.1)$ is oscillatory.
\end{theorem}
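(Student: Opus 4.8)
The plan is to mirror the proof of $Theorem$ $2.10$, the only genuinely new ingredient being a sharper monotonicity supplied by $Lemma$ $2.5$. Suppose for contradiction that $(1.1)$ admits a nonoscillatory solution; without loss of generality take it positive, with $y(t)>0$ and $y\left(\sigma(t)\right)>0$ on $\big[t_{1},\infty\big)$. By $Lemma$ $2.1$, $y$ eventually belongs to one of $S_{1},S_{2},S_{3},S_{4}$, and I would treat each class in turn. For $S_{3}$ and $S_{4}$ I would argue exactly as at the end of $Theorem$ $2.10$: since $(2.3)$ holds, the iterated integral is unbounded, so $(2.2)$ holds and the $Theorem$ $2.1$ reasoning gives $S_{3}=S_{4}=\emptyset$. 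For $S_{2}$ I would reproduce the $S_{2}$ argument of $Theorem$ $2.5$, which produces a first-order advanced inequality whose oscillation---guaranteed by $(2.33)$ via $Remark$ $2.2$---contradicts the existence of a positive solution, so $S_{2}=\emptyset$.

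The whole content of the theorem therefore sits in the class $S_{1}$. For $y\in S_{1}$ I would introduce the same generalized Riccati function as in $(2.36)$,
\[
w:=\rho\left(\frac{L_{1}y}{y^{\alpha}}+\frac{1}{\pi_{1}^{\alpha}}\right),
\]
which is nonnegative by $(2.17)$, and differentiate it to recover the identity $(2.37)$. The decisive modification is in the analogue of $(2.38)$. Whereas $Theorem$ $2.10$ used $(2.16)$---the monotonicity of $y/\pi_{1}$---to bound $y^{\alpha}\left(\sigma(t)\right)$ from below by $\left(\pi_{1}\left(\sigma(t)\right)/\pi_{1}(t)\right)^{\alpha}y^{\alpha}(t)$, here I would invoke instead the finer monotonicity $(2.48)$ of $Lemma$ $2.5$, applicable because $\lambda\in[0,\alpha)$ satisfies $(2.46)$: the function $y/\pi_{1}^{1-\frac{\lambda}{\alpha}}$ is nondecreasing. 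Since $\sigma(t)\geq t$ and $\pi_{1}$ is decreasing, this yields $y\left(\sigma(t)\right)\geq \left(\pi_{1}\left(\sigma(t)\right)/\pi_{1}(t)\right)^{1-\frac{\lambda}{\alpha}}y(t)$, and raising to the power $\alpha$ replaces the exponent $\alpha$ by $\alpha-\lambda$, so that $(2.38)$ becomes
\[
\left(L_{1}y\right)'(t)\leq -\left(\frac{\pi_{1}\left(\sigma(t)\right)}{\pi_{1}(t)}\right)^{\alpha-\lambda}r_{2}^{-\frac{1}{\beta}}(t)\left(\int_{t_{2}}^{t}q(s)\text{d}s\right)^{\frac{1}{\beta}}y^{\alpha}(t).
\]

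From this point the computation is parallel to $Theorem$ $2.10$. Substituting the sharpened bound into $(2.37)$, setting $A=\rho'/\rho$, $B=\alpha/\left(r_{1}\rho\right)^{\frac{1}{\alpha}}$, $C=\rho/\pi_{1}^{\alpha}$, and applying $Lemma$ $2.4$ (estimate $(2.34)$) to the term $Aw-B\left(w-C\right)^{\frac{\alpha+1}{\alpha}}$, I would obtain a Riccati inequality of the form $(2.39)$ but carrying the exponent $\alpha-\lambda$ in the ratio $\pi_{1}\left(\sigma(u)\right)/\pi_{1}(u)$. Integrating from $t_{2}$ to $t$ as in $(2.40)$, then discarding the nonpositive boundary term by means of the bound $-\rho/\pi_{1}^{\alpha}\leq \rho L_{1}y/y^{\alpha}\leq 0$ coming from $(2.17)$, I would arrive at the analogue of $(2.41)$; multiplying by $\pi_{1}^{\alpha}(t)/\rho(t)$ and taking $\limsup$ as $t\to\infty$ contradicts $(2.58)$, forcing $S_{1}=\emptyset$ and completing the proof. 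The only nonroutine step is the replacement of $(2.16)$ by $(2.48)$; verifying that $(2.46)$ indeed licenses $Lemma$ $2.5$ and that the exponent collapses to exactly $\alpha-\lambda$ is where the care is needed, after which the telescoping integrations and the $Lemma$ $2.4$ optimization are mechanical.
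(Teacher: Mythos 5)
Your proposal is correct and coincides with the paper's own argument: the paper proves Theorem 2.12 by the single remark that one should repeat the proof of Theorem 2.10 with $(2.16)$ replaced by $(2.48)$ from Lemma 2.5 in the step following $(2.25)$, which is exactly the substitution you carry out (with the exponent correctly collapsing to $\alpha-\lambda$, and with $\mu=0$ implicitly making $(2.45)$ and $(2.47)$ automatic). Your treatment of the classes $S_{2}$, $S_{3}$, $S_{4}$ also matches the paper's reduction to Theorems 2.5 and 2.1.
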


\begin{proof}
\ For the proof of this $Theorem$, it suffices to use $(2.48)$ instead of $(2.16)$ in $(2.25)$
in the proof of  $Theorem$ $2.10$.
\end{proof}

\begin{corollary}
\ Assume $(H_{1})-(H_{4})$ and $\gamma=\alpha\beta$. Furthermore, suppose that $(2.3)$ and $(2.33)$ hold
and $\lambda\in [0,\alpha)$ is a  constant satisfying $(2.46)$. If
\begin{equation}\label{2.59}
\aligned
\limsup\limits_{t\rightarrow\infty}
&\int_{T}^{t}\left(r_{2}^{-\frac{1}{\beta}}(u)
\left(\int_{T}^{u}q(s)\text{d}s\right)^{\frac{1}{\beta}}\pi_{1}^{\alpha-\lambda}\left(\sigma(u)\right)
\pi_{1}^{\lambda}(u)
-\left(\frac{\alpha}{\alpha +1}\right)^{\alpha +1}\frac{1}{r_{1}^{\frac{1}{\alpha}}(u)\pi_{1}(u)}
\right)\text{d}u\\
&>1,
\endaligned
\end{equation}
for any $T\in [t_{0},\infty)$,  then $(1.1)$ is oscillatory.
\end{corollary}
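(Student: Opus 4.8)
The plan is to obtain Corollary 2.5 as the special case of $Theorem$ $2.12$ corresponding to the explicit weight $\rho(t)=\pi_{1}^{\alpha}(t)$. Since all the standing hypotheses $(H_{1})$--$(H_{4})$, the relation $\gamma=\alpha\beta$, the conditions $(2.3)$ and $(2.33)$, and the admissibility $(2.46)$ of $\lambda$ are assumed verbatim in both statements, the only things to verify are that this $\rho$ is an admissible test function and that, for it, the abstract criterion $(2.58)$ collapses exactly to $(2.59)$. First I would record that $\rho(t)=\pi_{1}^{\alpha}(t)\in C^{1}\left([t_{0},\infty),(0,\infty)\right)$: positivity follows from $\pi_{1}>0$ under $(H_{2})$, and differentiability from $r_{1}\in C$ together with the identity $\pi_{1}'(t)=-r_{1}^{-\frac{1}{\alpha}}(t)$.

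Next I would simplify the two pieces of $(2.58)$ separately. The outer prefactor is trivial: $\pi_{1}^{\alpha}(t)/\rho(t)=1$. For the first term in the integrand, substituting $\rho(u)=\pi_{1}^{\alpha}(u)$ and combining with the factor $\left(\pi_{1}\left(\sigma(u)\right)/\pi_{1}(u)\right)^{\alpha-\lambda}$ gives $\pi_{1}^{\alpha}(u)\cdot\pi_{1}^{\alpha-\lambda}\left(\sigma(u)\right)\pi_{1}^{-(\alpha-\lambda)}(u)=\pi_{1}^{\alpha-\lambda}\left(\sigma(u)\right)\pi_{1}^{\lambda}(u)$, which is precisely the first term of $(2.59)$.

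The only computation requiring care is the damping term. Using $\rho'(u)=-\alpha\pi_{1}^{\alpha-1}(u)r_{1}^{-\frac{1}{\alpha}}(u)$, I would substitute into $r_{1}(u)\left(\rho'(u)\right)^{\alpha+1}/\left[(\alpha+1)^{\alpha+1}\rho^{\alpha}(u)\right]$ and track the exponents: the powers of $\pi_{1}$ combine to $(\alpha-1)(\alpha+1)-\alpha^{2}=-1$, the powers of $r_{1}$ combine to $1-\frac{\alpha+1}{\alpha}=-\frac{1}{\alpha}$, and the numerical constant becomes $\alpha^{\alpha+1}/(\alpha+1)^{\alpha+1}=\left(\alpha/(\alpha+1)\right)^{\alpha+1}$. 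This yields exactly $\left(\alpha/(\alpha+1)\right)^{\alpha+1}r_{1}^{-\frac{1}{\alpha}}(u)/\pi_{1}(u)$, matching the second term of $(2.59)$.

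The main (and essentially only) obstacle is the sign bookkeeping in $\left(\rho'(u)\right)^{\alpha+1}$, since $\rho'<0$. Here I would invoke $(H_{1})$: writing $\alpha=p/q$ with $p,q$ odd, one has $\alpha+1=(p+q)/q$ with $p+q$ even, so $(-1)^{\alpha+1}=1$ under the odd-root convention and the damping term stays positive, exactly as the formula demands. With both terms identified, $(2.58)$ becomes literally $(2.59)$, and $Theorem$ $2.12$ then forces all the classes $S_{1},S_{2},S_{3},S_{4}$ to be empty, whence $(1.1)$ is oscillatory.
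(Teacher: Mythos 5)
Your proposal is correct and matches the paper's intended argument: the corollary is exactly Theorem 2.12 specialized to $\rho(t)=\pi_{1}^{\alpha}(t)$ (the same choice that produces Corollary 2.2 from Theorem 2.10), and your exponent and sign bookkeeping, including $(-\alpha)^{\alpha+1}=\alpha^{\alpha+1}$ via $(H_{1})$, is accurate. The paper omits the verification entirely, so your computation simply fills in the routine substitution it leaves implicit.
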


\vspace{.4cm} \noindent{\bf 3. Examples}
 \setcounter{section}{3}
 \setcounter{equation}{0}

In this section, we illustrate the strength of our results using two
Euler-type differential equations, as two examples.

\begin{example}\label{3.1}
\ Consider the third-order advanced  differential equation
\begin{equation}\label{3.1}
\left(t^{3}\left(\left(t^{4}\left(y'(t)\right)^{\frac{5}{3}}\right)'\right)^{\frac{1}{7}}\right)'
+t^{6}y^{\frac{9}{5}}(2t)=0,\ \ \ t\geq 1.
\end{equation}
\end{example}
It is easy to verify that  the condition $(2.1)$ is satisfied. Using $Theorem\ 2.1$, we
obtain that $Eq.$ $(3.1)$ has $property$ $A$.

\begin{example}\label{3.2}
\ Consider the third-order advanced  differential equation
\begin{equation}\label{3.2}
\left(t^{n}\left(\left(t^{m}y'(t)\right)'\right)^{\frac{1}{3}}\right)'
+q_{0}t^{\frac{m}{3}+n-\frac{5}{3}}y^{\frac{1}{3}}\left(\delta t\right)=0,\ \  t\geq 1,
\end{equation}
where $m>1$, $n>\frac{1}{3}$, $q_{0}>0$ and $\delta\geq 1$.
\end{example}
Clearly, $r_{1}(t)=t^{m}$, $r_{2}(t)=t^{n}$, $\alpha=1$, $\beta=\frac{1}{3}$,
$\gamma=\alpha\beta=\frac{1}{3}$, $q(t)=q_{0}t^{\frac{m}{3}+n-\frac{5}{3}}$,
$\sigma(t)=\delta t$, and
$$
\pi_{1}(t)=\int_{t}^{\infty}r_{1}^{-\frac{1}{\alpha}}(s)\text{d}s
=\int_{t}^{\infty}s^{-m}\text{d}s
=\frac{t^{1-m}}{m-1}.
$$

$Theorem$ $2.1$ (on the asymptotic properties of nonoscillatory solutions).
It is easy to verify that the condition $(2.1)$ holds. Thus, any nonoscillatory,
say positive solution of $Eq.$ $(3.2)$ converges to zero as $t\rightarrow\infty$,
without any additional requirement.

As following, we consider the oscillation of $Eq.$ $(3.2)$.

After some computations, we note that the conditions $(2.23)$, $(2.28)$ and $(2.33)$
reduce to
\begin{equation}\label{3.3}
27q_{0}^{3}\delta^{1-m}>(m+3n-2)^{3}(m-1)^{2},
\end{equation}
\begin{equation}\label{3.4}
27q_{0}^{3}\ln\delta>\frac{(m+3n-2)^{3}(m-1)}{\text{e}},
\end{equation}
and
\begin{equation}\label{3.5}
\aligned
27q_{0}^{3}\big\{
&\frac{\delta^{m+3n-2}-1}{(m+3n-2)(3n-1)}
+\frac{\ln\delta}{m-1}\\
&+\frac{27\left(\delta^{\frac{2m+6n-4}{3}}-1\right)}{(m-6n+1)(2m+6n-4)}
-\frac{27\left(\delta^{\frac{m+3n-2}{3}}-1\right)}{(2m-3n-1)(m+3n-2)}\\
&-\frac{\delta^{m-1}-1}{m-1}
\left(\frac{1}{3n-1}+\frac{9}{m-6n+1}-\frac{9}{2m-3n-1}+\frac{1}{m-1}\right)\big\}\\
&>\frac{(m+3n-2)^{3}}{\text{e}},
\endaligned
\end{equation}
respectively.

$ Theorem$ $ 2.5$ and $Remark$ $2.2$ imply if both $(3.4)$ and $(3.5)$ hold, then $Eq.$ $ (3.2)$ is oscillatory.

$ Theorem$ $ 2.7$. Since condition $(2.19)$ is not satisfied, the related result from $ Theorem$ $ 2.7$
does not apply.

$ Theorem$ $ 2.8$ and $ Theorem$ $ 2.9$ in the sense that oscillation of $Eq.$ $ (3.2)$ is guaranteed
by the conditions $(3.3)$ and $(3.5)$.

\vspace{.4cm} \noindent{\bf 4. Summary}
 \setcounter{section}{4}
 \setcounter{equation}{0}

In  this paper, we studied the third-order  differential equation $(1.1)$ with noncanonical operators.
First, we established one-condition criteria for $property$ $A$ of $(1.1)$. Next, we presented various
two-condition criteria ensuring oscillation of all solutions of $(1.1)$. Finally, our results are applicable
on Euler-type equations of the forms $ (3.1)$ and $ (3.2)$. It remains open how to generalize these results
for higher-order noncanonical equations with deviating arguments.

\noindent {\bf Acknowledgements}\hspace{0.2cm}

The authors would like to express their highly appreciation to the editors and the referees
for their  valuable comments.

\vspace{15pt}

\noindent{\bf References}
\newcounter{cankao}
\begin{list}
{[\arabic{cankao}]}{\usecounter{cankao}\itemsep=-0.05cm}

\bibitem{1} G.E.Chatzarakis, J.D\v{z}urina, I.Jadlovsk\'{a}, New oscillation criteria for second-order
half-linear advanced differential equations, {\em  Appl. Math. Comput.}  347, (2019) 404-416.

\bibitem{2}  W.F.Trench, Canonical forms and principal systems for general disconjugate equations,
{\em  Trans. Amer. Math. Soc.}  189, (1973) 319-327.

\bibitem{3}  I.T.Kiguradze, T.A.Chanturia, Asymptotic properties of solutions of nonautonomous ordinary
differential equations, {\em Mathematics and its Applications (Soviet Series), }  vol. 89,
Kluwer Academic Publishers Group, Dordrecht, 1993,
doi: 10.1007/978-94-011-1808-8. Translated from the 1985 Russian original.

\bibitem{4} G.S.Ladde, V.Lakshmikantham, B.G.Zhang, Oscillation Throry of Differential Equations with
Deviating Arguments, {\em Marcel Dekker,. Inc., } New York, 1987.

\bibitem{5} H.Wu, L.Erbe, A.Peterson, Oscillation of solution to second-order half-linear delay
dynamic equations on time scales, {\em Electron J. Diff. Equ. }  71, (2016) 1-15.

\bibitem{6}  J.D\v{z}urina, I.Jadlovsk\'{a}, Oscillation of third-order differential equations
with noncanonical operators, {\em  Appl. Math. Comput. }  336, (2018) 394-402.

\bibitem{7}  R.P.Agarwal, S.R.Grace, D.O'Regan, Oscillation Theory for Second Order Linear, Half-Linear,
Superlinear, Sublinear Dynamic Equations, {\em Kluwer Academic Publishers, Dordrecht. } (2002) .

\bibitem{8}  R.P.Agarwal, S.R.Grace, D.O'Regan, Oscillation Theory for Second Order Dynamic Equations,
{\em Series in Mathematical Analysis and Applications. } 5. Taylor $\&$ Francis, Ltd., London , (2003) .

\bibitem{9}  R.P.Agarwal, M.Bohner, W.-T.Li, Nonoscillation and Oscillation: Theory for Functional Differential
 Equations, {\em Monographs and Textbooks in Pure and Applied Mathematics. } 267 , Marcel Dekker,
 Inc., New York, (2004) .

\bibitem{10} R.P.Agarwal, S.R.Grace, D.O'Regan, Oscillation Theory for Difference and
Functional Differential Equations,, {\em  Springer Science $\&$ Business Media.}  (2013) .

\bibitem{11} R.P.Agarwal, C.Zhang, T.Li, New Kamenev-type oscillation criteria for second-order
nonlinear advanced dynamic equations, {\em  Appl. Math. Comput. } 225, (2013) 822-828.

\bibitem{12} R.P.Agarwal, C.Zhang, T.Li, Some remarks on oscillation of second order neutral differential
equations, {\em  Appl. Math. Comput. } 274, (2016) 178-181.

\bibitem{13} E.M.Elabbasy, T.S.Hassan, B.M.Elmatary, Oscillation  criteria for third order delay nonlinear
differential equations, {\em Electron. J. Qual. Theory Differ. Equ. } 2012 (5) (2012) 1-11.

\bibitem{14} M.Bohner, S.R.Grace, I.Jadlovsk\'{a}, Oscillation criteria for third-order functional differential
equations with damping, {\em Electron. J. Differ. Equ. } 2016 (215) (2016)  1-15.

\bibitem{15} T.Li, C.Zhang, G.Xing, Oscillation of third-order neutral delay differential equations,
{\em Abstr. Appl. Anal. } 2012  (2012)  1-11.

\bibitem{16} R.Agarwal, M.Bohner, T.Li, C.Zhang, et. al., Oscillation of third-order nonlinear delay differential
equations, Taiwan, {\em . J. Math. } 17 (2) (2013) 545-558.

\bibitem{17} R.P.Agarwal, M.F.Aktas, A.Tiryaki, On oscillation criteria for third order nonlinear delay differential
equations, {\em  Arch. Math. } (Brno) 45 (1) (2009) 1-18.

\bibitem{18} M.Aktas, A.Tiryaki, A.Zafer, Oscillation criteria for third-order nonlinear functional differential
equations, {\em Appl. Math. Lett. } 23 (7) (2010) 756-762.

\bibitem{19} B.Bacul\'{i}kov\'{a}, J.D\v{z}urina, Oscillation of third-order functional differential equations,
{\em Electron. J. Qual. Theory Differ. Equ. } (43) (2010) 1-10.

\bibitem{20} B.Bacul\'{i}kov\'{a}, E.M.Elabbasy, S.H.Saker, J.D\v{z}urina, Oscillation criteria for third-order
nonlinear differential equations,
{\em  Math. Slovaca } 58 (2) (2008) 201-220, doi:10.2478/s12175-008-0068-1.

\bibitem{21} T.Candan, R.S.Dahiya, Oscillation of third order functional differential equations with delay,
in: Proceedings of the Fifth
Mississippi State Conference on Differential Equations and Computational Simulations (Mississippi State, MS, 2001),
 in: {\em Electron. J.  Differ. Equ. Conf.,}
 vol. 10, Southwest Texas State Univ., San Marcos, TX, 2003, pp. 79-88.

\bibitem{22} M.Cecchi, Z.Do\v{s}l\'{a}, M.Marini, Some properties of third order differential operators,
{\em Czechoslov. Math. J.}  47 (4) (1997) 729-748.

\bibitem{23} M.Cecchi, Z.Do\v{s}l\'{a}, M.Marini, Disconjugate operators and  related differential equations,
in: Proceedings of the Sixth Colloquium on the Qualitative Theory of Differential Equations, Szeged,
2000, p.No.4.17, {\em Electron. J. Qual. Theory Differ. Equ. }

\bibitem{24} G.E.Chatzarakis,  S.R.Grace, I.Jadlovsk\'{a}, Oscillation criteria for third-order delay
differential equations, {\em Adv. Differ. Equ. } 2017 (1) (2017) 330.

\bibitem{25} J.D\v{z}urina, I.Jadlovsk\'{a}, A note on Oscillation  of second-order delay differential
equations, {\em Appl. Math. Lett. } 69 (2017) 126-132.

\bibitem{26} S.R.Grace,  R.P.Agarwal, R.Pavani, E. Thandapani, On the oscillation of certain third order
nonlinear functional differential equations, {\em Appl. Math. Comput. } 202 (1) (2008) 102-112,
doi: 10.1016/j.amc.2008.01.025.

\bibitem{27} J.K.Hale, Functional Differential Equations,   {\em Applied Mathematical Sciences, } 3,
Spring-Verlag New York, New York-Heidelberg, 1971.

\bibitem{28} C.Zhang, T.Li, B.Sun, E.Thandapani, On the  oscillation of higher-order half-linear
delay differential equations, {\em Appl. Math. Lett. } 24 (9) (2011).

\end{list}
\end{document}